\DeclareMathAlphabet{\mathpzc}{OT1}{pzc}{m}{it}
\newtheorem{theorem}[equation]{Theorem}
\newtheorem{thm}[equation]{Theorem}
\newtheorem*{theorem*}{Theorem}
\newtheorem{theorem-definition}[equation]{Theorem-Definition}
\newtheorem{lemma-definition}[equation]{Lemma-Definition}
\newtheorem{definition-prop}[equation]{Proposition-Definition}
\newtheorem{corollary}[equation]{Corollary}
\newtheorem{prop}[equation]{Proposition}
\newtheorem*{prop*}{Proposition}
\newtheorem{lemma}[equation]{Lemma}
\newtheorem{cor}[equation]{Corollary}
\newtheorem{definition}[equation]{Definition}
\newtheorem*{definition*}{Definition}
\newtheorem*{conjecture*}{Conjecture}
\theoremstyle{definition}
\newtheorem{exam}[equation]{Example}
\newtheorem*{question*}{Question}
\newtheorem{remark}[equation]{Remark}
\newcommand{\Z}{\ensuremath{\mathbb{Z}}}
\newcommand{\Q}{\ensuremath{\mathbb{Q}}}
\newcommand{\R}{\ensuremath{\mathbb{R}}}
\newcommand{\C}{\ensuremath{\mathbb{C}}}
\newcommand{\A}{\ensuremath{\mathbb{A}}}
\newcommand{\cX}{\ensuremath{\mathscr{X}}}
\newcommand{\cA}{\ensuremath{\mathscr{A}}}
\newcommand{\cY}{\ensuremath{\mathscr{Y}}}
\renewcommand{\R}{\ensuremath{\mathbb{R}}}
\renewcommand{\C}{\ensuremath{\mathbb{C}}}
\renewcommand{\A}{\ensuremath{\mathbb{A}}}
\newcommand{\fX}{\ensuremath{\mathfrak{X}}}
\newcommand{\fY}{\ensuremath{\mathfrak{Y}}}
\renewcommand{\cA}{\ensuremath{\mathscr{A}}}
\renewcommand{\cY}{\ensuremath{\mathscr{Y}}}
\newcommand{\Spec}{\ensuremath{\mathrm{Spec}\,}}
\newcommand{\Spf}{\ensuremath{\mathrm{Spf}\,}}
\newcommand{\red}{\mathrm{red}}
\newcommand{\Gal}{\mathrm{Gal}}
\newcommand{\an}{\mathrm{an}}
\newcommand{\llbr}{[\negthinspace[}
\newcommand{\rrbr}{]\negthinspace]}
\newcommand{\llpar}{(\negthinspace(}
\newcommand{\rrpar}{)\negthinspace)}
\newcommand{\wt}{\mathrm{wt}}
\newcommand{\snc}{\mathrm{snc}}
\newcommand{\mld}{\mathrm{mld}}
\newcommand{\Sk}{\mathrm{Sk}}
\newcommand{\comp}[2]{\widehat{#1_{/#2}}}
\numberwithin{equation}{section}
\newcommand{\sss}{\vspace{5pt} \subsection*{ }\refstepcounter{equation}{{\bfseries(\theequation)}\ }}
\author[Johannes Nicaise]{Johannes Nicaise}
\address{Imperial College,
Department of Mathematics, South Kensington Campus,
London SW72AZ, UK, and KU Leuven, Department of Mathematics, Celestijnenlaan 200B, 3001 Heverlee, Belgium.} \email{j.nicaise@imperial.ac.uk}
\author{Chenyang Xu}
\address{Massachusetts Institute of Technology, 77 Massachusetts Ave., Cambridge, MA, USA}
\email{cyxu@math.mit.edu}
\author{Tony Yue Yu}
\address{Laboratoire de Math\'ematiques d'Orsay, Universit\'e Paris-Sud, 91405 Orsay, France}\email{yuyuetony@gmail.com}
\subjclass[2010]{14J33 (primary), 14E30, 32P05 (secondary). }
\keywords{Mirror symmetry, non-archimedean geometry, minimal model program, Strominger-Yau-Zaslow conjecture.}
\thanks{Johannes Nicaise is supported by the ERC Starting Grant MOTZETA (project 306610) of the European Research Council, and by long term structural funding (Methusalem
grant) of the Flemish Government. A part of the research leading to these results was carried out at the Freiburg Institute for Advanced Studies (FRIAS) with funding from the People Programme (Marie Curie Actions) of the European  Union's  Seventh Framework Programme (FP7/2007-2013) under REA grant agreement number 609305. Chenyang Xu is supported by the National Science Fund for Distinguished Young Scholars (11425101), ``Algebraic Geometry.'' Tony Yue Yu is supported by the Clay Mathematics Institute.}
\begin{document}
\title{The non-archimedean SYZ fibration}

\begin{abstract}
We construct non-archimedean SYZ fibrations for maximally degenerate Calabi-Yau varieties, and we show that they are affinoid torus  fibrations away from a codimension two subset of the base. This confirms a prediction by Kontsevich and Soibelman. We also give an explicit description of the induced integral affine structure on the base of the SYZ fibration. Our main technical tool is a study of the structure of minimal dlt-models along one-dimensional strata.
\end{abstract}

\maketitle

\section{Introduction}
\sss The theory of mirror symmetry emanated from string theory and has had a fundamental impact on algebraic geometry ever since the groundbreaking work of Candelas, de la Ossa, Green and Parkes \cite{COGP}. The mirror symmetry heuristic predicts that every complex Calabi-Yau manifold $X$ has a mirror partner $\check{X}$ of the same dimension whose complex geometry is equivalent, in a suitable sense, to the symplectic geometry of $X$, and vice versa. A celebrated application of these ideas was the prediction
of the numbers of rational curves of fixed degree (more precisely, Gromov-Witten invariants) of the quintic threefold in \cite{COGP} by means of period integral calculations on the mirror partner.
    An important challenge in the theory of mirror symmetry is to give an exact definition of what it means to be a mirror pair of Calabi-Yau manifolds, and to devise techniques to construct such pairs.

\sss In recent years, much progress has been made, in particular by Kontsevich--Soibelman \cite{KoSo2000,KoSo} and Gross--Siebert \cite{GS}. Both of these programs are based on a conjectural
 geometric explanation of mirror symmetry due to Strominger, Yau and Zaslow, known as the SYZ conjecture \cite{SYZ}.
  Since its appearance, the conjecture has been amended in certain ways; a common way to formulate it today is the following.
  Let $\mathcal{X}^{\ast}$ be a projective family of $n$-dimensional complex Calabi-Yau varieties
 over a punctured disk $\Delta^{\ast}$, and assume that this family is maximally degenerate. The latter condition means that
 the monodromy transformation on the degree $n$ cohomology of the general fiber $\mathcal{X}_t$ of $\mathcal{X}^{\ast}$ has a Jordan block of rank $n+1$. Then, up to rescaling the metrics, the family $\mathcal{X}_t$ is conjectured to converge in the Gromov-Hausdorff limit to an $n$-dimensional topological manifold $S$.
 Moreover, a general fiber $\mathcal{X}_t$
 should admit a fibration $\rho\colon\mathcal{X}_t\to S$, called an {\em SYZ} fibration, whose fibers are special Lagrangian tori in $\mathcal{X}_t$, except over a discriminant locus of codimension at least $2$ in the base $S$.
     The mirror partner of $\mathcal{X}_t$ can then be constructed by dualizing the torus fibration $\rho$ over the smooth locus and compactifying the result in an appropriate way (this involves deforming the dual fibration by so-called quantum corrections). We refer to the excellent survey paper \cite{Gro} for a more precise statement and additional background on the SYZ conjecture, as well as the Gross--Siebert program.

\sss The SYZ conjecture remains largely open, and is quite difficult even in basic cases; see for instance \cite{GW}. A fundamental insight of Kontsevich and Soibelman in \cite{KoSo} is that one should be able to construct a close analog of the SYZ fibration in the world of non-archimedean geometry, more precisely in the context of Berkovich spaces. Here, the base $S$ of the fibration arises as a so-called {\em skeleton} in the Berkovich analytification of the degeneration.  Let us emphasize that the non-archimedean SYZ fibration is not merely an analog of the conjectural structure in a different context; it can effectively be used to realize the original goal of constructing mirror partners over the complex numbers, since one can go back from the non-archimedean world to the complex world by means of non-archimedean GAGA and algebraization techniques.
 In the non-archimedean approach, the quantum corrections are provided by non-archimedean enumerative geometry and wall-crossing structures  \cite{KoSo,yu,yub,KY18}.
  The non-archimedean SYZ fibration induces an affine structure with singularities on the base $S$, and Kontsevich and Soibelman made the striking conjecture that this affine manifold should be related to the Gromov-Hausdorff limit of $\mathcal{X}$ (Conjecture 3 in \cite{KoSo}) -- see \cite{BoJo} for interesting results towards that conjecture.

\sss The aim of the present paper is to construct the non-archimedean SYZ fibration in general, and to prove some of its conjectural properties.
  This paves the way for a better understanding of the Gromov-Hausdorff limits and the SYZ conjecture.
 Our construction of the SYZ fibration builds upon the original work of Kontsevich and Soibelman and the relations with the Minimal Model Program discovered by the first two authors in \cite{NiXu}. This discovery has led to a surprising dictionary where the SYZ heuristic can be translated into precise predictions about the structure of minimal models, which can then be proven with techniques from the Minimal Model Program -- see for instance \cite{KoXu} and \cite{NiXu2}.
 Our main new result here is that the non-archimedean SYZ fibration is a smooth affinoid torus fibration away from a codimension two subset of the base (Theorem \ref{thm:main}), as implied by Conjectures 1 and 3 in \cite{KoSo}. This amounts to proving that
 minimal dlt models with reduced special fiber of Calabi-Yau varieties are snc along the one-dimensional strata of the special fiber (Theorem \ref{theo:dlt}), and have a toric structure along these strata (Proposition \ref{prop:toric}).

\subsection*{Acknowledgements} We are grateful to the referee for carefully reading the article and
 pointing out a mistake in the proof of Proposition \ref{prop:toric}. The first-named author is grateful to Mirko Mauri for a helpful discussion on Proposition \ref{prop:approx0}.

\subsection*{Preliminaries and notation}
\sss We fix an algebraically closed field $k$ of characteristic $0$ and we set $R=k\llbr t \rrbr$ and $K=k\llpar t \rrpar$.
 We also fix an algebraic closure $K^a$ of $K$.
We denote by $\mathrm{ord}_t$ the $t$-adic valuation on $K$
 and we define an absolute value $|\cdot|$ on $K$ by setting $|a|=\exp(-\mathrm{ord}_t a)$ for every $a\in K^{\ast}$. This turns $K$ into a complete non-archimedean field.
 The natural logarithm, inverse to the exponential $\exp$, will be denoted by $\ln$.

\sss We denote by $(\cdot)^{\an}$ the analytification functor from the category of $K$-schemes of finite type to Berkovich's category of $K$-analytic spaces.
 For every $R$-scheme $\cX$, we will denote by $\cX_k=\cX\times_R k$ and $\cX_K=\cX\times_R K$ its special and generic fiber.

\sss If $\cX$ is a Noetherian $R$-scheme and $C$ is a subscheme of $\cX_k$, then we will denote by $\comp{\cX}{C}$ the formal completion of $\cX$ along $C$. If $\cX$ is of finite type over $R$, then $\comp{\cX}{C}$ is formally of finite type over $R$ (or {\em special}, in the terminology of \cite{berk-vanish2}). That is, it has a finite cover by open formal subschemes of the form $\Spf(A)$ where $A$ is a quotient of a topological $R$-algebra of the form $R\{x_1,\ldots,x_m\}\llbr y_1,\ldots,y_n\rrbr$.
  Every Noetherian formal scheme $\fX$ has a unique maximal ideal of definition $\mathscr{I}$, consisting of all the topologically nilpotent elements in $\mathcal{O}_{\fX}$.
  The closed subscheme of $\fX$ defined by $\mathscr{I}$ will be denoted by $\fX_{\red}$. This construction induces a functor from the category of Noetherian formal schemes to the category of reduced Noetherian schemes. If $\fX$ is a scheme, then $\fX_{\red}$ is the maximal reduced closed subscheme of $\fX$.

\sss \label{sss:toric} A separated flat $R$-scheme of finite type $\cY$ is called {\em toric} if there exists a toric morphism of toric varieties $$Y\to \A^1_k=\Spec k[t]$$ such that $\cY$ is isomorphic to
 $Y\times_{k[t]}R$. Such a toric scheme can be defined by giving a finite fan $\Sigma$ of strongly convex rational polyhedral cones in $\R^n\times \R_{\geq 0}$ for some $n\geq 0$, together with a positive integer $\iota$; then one can take $Y$ to be the toric $k$-variety associated with $\Sigma$ and $Y\to \A^1_k$ to be the toric morphism induced by the morphism
 $$\R^n\times \R_{\geq 0}\to \R_{\geq 0}\colon (u,v)\mapsto \iota \cdot v.$$
This is a slight generalization of the standard definition of a torus embedding over $R$ used in \cite[IV.3]{KKMS}, which corresponds to the case $\iota=1$ of our definition.
 For instance, the $R$-scheme $\Spec k\llbr \sqrt{t} \rrbr$ is toric in our sense, but not in the sense of \cite[IV.3]{KKMS}.

\sss A Calabi-Yau variety over a field $F$ is a smooth, proper, geometrically connected $F$-scheme $X$ such that the canonical line bundle $\omega_X$ is trivial. In particular, our definition also includes abelian varieties. A {\em volume form} on a Calabi-Yau variety $X$ is a nowhere vanishing differential form of maximal degree, that is, a global generator for the canonical line bundle $\omega_X$.

\sss Let $\cX$ be a Noetherian scheme, and let $D$ be an effective divisor on $\cX$, with prime components $D_i,\,i\in I$.
   A {\em stratum} of $D$ is a connected component of the schematic intersection $D_J=\cap_{j\in J}D_j$, for some non-empty subset $J$ of $I$.
   An {\em open stratum} is a stratum $S$ minus the union of the prime components of $D$ that do not contain $S$.

\sss Let $X$ be a smooth and proper $K$-scheme. A {\em model} of $X$ is a proper flat $R$-scheme $\cX$ endowed with an isomorphism $\cX_K\to X$.
  An snc-model of $X$ is a regular model $\cX$  such that $\cX_k$ is a divisor with strict normal crossings. An snc-model is called {\em semistable} if $\cX_k$ is reduced. By the semistable reduction theorem \cite[Ch4\S3]{KKMS}, there exists a finite extension $K'$ of $K$ such that $X\times_K K'$ has a semistable snc-model over the integral closure of $R$ in $K'$; if $X$ is projective, then we can moreover obtain a projective semistable snc-model.

  A dlt-model of $X$ is a normal  model $\cX$ such that the pair $(\cX,\cX_{k,\red})$ is divisorially log terminal (dlt).
  We say that a dlt-model is {\em good} if every prime component of $\cX_{k,\red}$ is $\Q$-Cartier; this is slightly weaker than the usual condition that $\cX$ is $\Q$-factorial, but it is sufficient for our purposes.
  In particular, every snc-model is also a good dlt-model.
    A dlt-model $\cX$ is called {\em minimal} if
 the logarithmic relative canonical divisor $K_{\cX/R}+\cX_{k,\red}$ is semi-ample. When $X$ is Calabi-Yau, this is equivalent to saying that $K_{\cX/R}+\cX_{k,\red}$ is torsion; when, moreover, $\cX_k$ is reduced, then it is equivalent to saying that $K_{\cX/R}\sim 0$ (since $\mathcal{O}^{\ast}(X)=K^{\ast}$, a divisor $D$ on $\cX$ supported on $\cX_k$ is principal if and only if it is a multiple of $\cX_k$; so if some nonzero multiple of $D$ is principal and $\cX_k$ is reduced, then $D$ is itself principal).

 \sss We collect what is currently known about the existence of these models in the following theorem. We say that a smooth and proper $K$-scheme $X$ is {\em defined over a curve} if there exist a smooth $k$-curve $C$, a smooth $k$-variety $Y$, a proper morphism $Y\to C$ and an isomorphism of $k$-algebras $R\cong \widehat{\mathcal{O}}_{C,c}$ for some closed point $c$ of $C$ such that  $X$ is isomorphic to $Y\times_C \Spec(K)$. If $X$ is projective, then we can take $Y$ to be projective over $C$, because projectivity descends under arbitrary field extensions \cite[14.55]{GW-AG}.

\begin{thm}\label{thm:exist}
Let $X$ be a projective Calabi-Yau variety over $K$.
 \begin{enumerate}
  \item \label{it:defcurve} If $X$ is defined over a curve, then $X$ has a projective good minimal dlt-model over $R$.
 \item \label{it:notsscurve} If $X$ is defined over a curve or $\dim(X)\leq 2$, then there exists a finite extension $K'$ of $K$ such that $X$ has a projective good minimal dlt-model with reduced special fiber over the integral closure of $R$ in $K'$.
 \item \label{it:projss} If $X$ has a projective semistable snc-model over $R$, then $X$ has a projective minimal dlt-model with reduced special fiber over $R$.
 \item \label{it:notss}  There exists a finite extension $K'$ of $K$ such that $X$ has a projective minimal dlt-model with reduced special fiber over the integral closure of $R$ in $K'$.
     \end{enumerate}
\end{thm}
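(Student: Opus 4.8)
The plan is to reduce each of the four assertions to a relative Minimal Model Program, using the Calabi-Yau hypothesis to control the output and the semistable reduction theorem to handle the base changes. For the first assertion, write $X\cong Y\times_C\Spec(K)$ with $Y\to C$ projective, $Y$ smooth over $k$, and $R\cong\widehat{\mathcal{O}}_{C,c}$. After shrinking $C$ around $c$ and resolving the fibre over $c$ one may assume that $Y\to C$ is smooth over $C\setminus\{c\}$ and that $Y_c$ is an snc divisor on $Y$; since the fibres over $C\setminus\{c\}$ are Calabi-Yau, $\omega_{Y/C}$ is fibrewise trivial there, so after shrinking $C$ further one may assume that $K_{Y/C}$ is supported on $Y_c$. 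Then $Y\times_C\Spec(R)$ is a projective snc-model of $X$ over $R$. One now runs a relative $(K_{Y/C}+Y_{c,\red})$-MMP over $C$: this is legitimate because $Y$ is of finite type over $k$, all of its steps take place over a neighbourhood of $c$ since $K_{Y/C}+Y_{c,\red}$ is supported on $Y_c$, and the program preserves both the dlt property and $\Q$-factoriality. The resulting model $\cX'\to C$ is $\Q$-factorial dlt, and $K_{\cX'/C}+\cX'_{c,\red}$ is nef over $C$; being $\Q$-linearly equivalent over $C$ to a divisor supported on the fibre $\cX'_c$, it is $\Q$-linearly trivial over $C$, hence semi-ample. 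Base-changing $\cX'$ along $\Spec(R)\to C$ yields the required projective good minimal dlt-model over $R$: normality is preserved because the formal fibres of the excellent ring $\mathcal{O}_{C,c}$ are geometrically regular, the dlt property and the $\Q$-Cartier property of the components of the special fibre survive flat pullback, and semi-ampleness of the logarithmic relative canonical divisor is unaffected. This is essentially the construction carried out in \cite{NiXu}.

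For the second assertion, when $X$ is defined over a curve one precedes the argument above by a semistable reduction: by \cite[Ch4\S3]{KKMS} there is a finite cover $C'\to C$, ramified only over $c$, such that after normalising $Y\times_C C'$ and passing to a suitable modification one obtains a family that is semistable near a point $c'$ above $c$; taking $K'$ to be the fraction field of $\widehat{\mathcal{O}}_{C',c'}$ and running the relative MMP as above produces a projective good minimal dlt-model of $X\times_K K'$ over the integral closure of $R$ in $K'$, whose special fibre stays reduced because divisorial contractions and flips never raise the multiplicities of the components of the special fibre. When $\dim(X)\leq 2$ the total space of a model has dimension at most three, so the hypothesis that $X$ be defined over a curve is unnecessary: both semistable reduction and the relative MMP are then available unconditionally over the discrete valuation ring $R$, and the argument is the same.

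For the third assertion, given a projective semistable snc-model $\cX_0$ of $X$ over $R$, one runs the relative semistable Minimal Model Program for the dlt pair $(\cX_0,\cX_{0,k})$ over $R$; it terminates with a model $\cX$ on which $K_{\cX/R}+\cX_{k,\red}$ is semi-ample, and, exactly as before, this divisor is supported on the special fibre and hence trivial, so $K_{\cX/R}\sim 0$. The special fibre of $\cX$ stays reduced because each step of the program contracts or flips only loci contained in the special fibre, and never creates a component of the special fibre of multiplicity $>1$. Thus $\cX$ is a projective minimal dlt-model of $X$ with reduced special fibre, which proves the third assertion; and the fourth follows at once by applying the semistable reduction theorem \cite[Ch4\S3]{KKMS} to obtain, over the integral closure of $R$ in a suitable finite extension $K'$ of $K$, a projective semistable snc-model of $X\times_K K'$, and then invoking the third assertion over that ring.

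The main obstacle is that the Minimal Model Program, together with the vanishing theorems it relies on, is available for varieties of finite type over a field of characteristic zero but not directly over $R=k\llbr t\rrbr$; this is exactly why the first two assertions require that $X$ be defined over a curve (or have dimension at most two), which lets one run the program on the finite-type total space $Y/C$ and only afterwards complete at $c$. The work then consists in verifying that minimality, the dlt property, and the $\Q$-Cartier property of the components of the special fibre all descend along this completion, and that the Calabi-Yau hypothesis upgrades the nefness of the logarithmic relative canonical divisor to torsionness. For the last two assertions the analogous input is the semistable MMP over a discrete valuation ring, imported from the literature, and there the point needing care is that reducedness of the special fibre propagates through every step of the program.
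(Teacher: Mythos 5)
Your proposal follows essentially the same route as the paper: reduce to a family over a curve (or a projective semistable snc-model) and run a relative log MMP; invoke the Calabi--Yau hypothesis to convert relative semi-ampleness of $K+\cX_{k,\red}$ into torsionness; and feed semistable reduction into the earlier parts to get reduced special fibers. Two small points are worth noting. First, for part (3) your phrase ``one runs the relative semistable Minimal Model Program for the dlt pair over $R$'' glosses over the fact that the MMP machinery is not available in general directly over $R$; the paper instead quotes \cite[Theorem 2]{KNX}, whose proof produces the desired minimal dlt-model with reduced special fiber but, as the paper stresses, does \emph{not} yield a $\Q$-factorial (hence good) model --- so one must be careful not to implicitly assume that the output of this step has $\Q$-Cartier components, which is indeed why the statement of (3) and (4) omits ``good.'' Second, for the $\dim(X)\le 2$ case of (2) the specific input is Kawamata's semistable MMP for threefolds over a DVR \cite{kawamata}, which you assert but do not identify; this is the precise reason the two-dimensional case can dispense with the ``defined over a curve'' hypothesis.
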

\begin{proof}
In case \eqref{it:defcurve}, we choose a smooth $k$-curve $C$, a smooth $k$-variety $Y$, a projective morphism $Y\to C$ and an isomorphism of $k$-algebras $R\cong \widehat{\mathcal{O}}_{C,c}$ for some closed point $c$ of $C$ such that $X$ is isomorphic to $Y\times_C \Spec(K)$. Shrinking $C$ around $c$, we can arrange that $Y$ is smooth over $C^o=C\setminus \{c\}$ with trivial relative canonical divisor. By running an MMP on $(Y,Y_{c,\red})$ over $C$, we find a new projective model $Y'$ over $C$ that is $\Q$-factorial and such that the pair $(Y',Y'_{c,\red})$ is dlt and $K_{Y'/C}+Y'_{c,\red}$ is relatively semi-ample (see \cite[2.3.6]{NiXu}). By base change to $\Spec(R)$, we obtain a projective good minimal dlt-model for $X$ over $R$. After a finite extension of $K$, we may furthermore assume that $Y_c$ is reduced, by the semistable reduction theorem; then $Y'_c$ is reduced, which proves \eqref{it:notsscurve} in the case where $X$ is defined over a curve. When $X$ has dimension at most $2$, then (after a finite extension of $K$) we may assume that $X$ has a projective semistable snc-model $\cX$ over $R$, and we can directly run an MMP on $\cX$ by \cite{kawamata}, producing a projective good minimal dlt-model with reduced special fiber over $R$.

Point \eqref{it:projss} follows from Theorem 2 in \cite{KNX} and its proof (unfortunately, that argument does not allow us to deduce the existence of a {\em good} minimal dlt-model). Point \eqref{it:notss} follows from \eqref{it:projss} and the semistable reduction theorem.
\end{proof}

\begin{remark}\label{rem:curve}
If $X$ is a projective Calabi-Yau variety over $K$, then it is expected that $X$ always has a projective good minimal dlt-model over $R$, and, up to a finite extension of $K$,
 also a projective good minimal dlt-model with reduced special fiber. Unfortunately, the necessary tools from the MMP have not yet been developed over the base ring $R$.
 For the applications to mirror symmetry, the case where $X$ is defined over a curve appears to be sufficient. Nevertheless, we will try to avoid this assumption as much as possible in the sequel, by means of some approximation arguments.
\end{remark}

\sss An {\em integral affine function} on an open subset of $\R^n$ is a continuous real-valued function that can locally be written as a degree one polynomial with coefficients in $\Z$. Beware that some authors, including \cite{KoSo}, allow a constant term in $\R$ in the degree one polynomial; our more restrictive definition is better suited for the purposes of this paper.

\section{Construction of the non-archimedean SYZ fibration}
\sss Let $X$ be a Calabi-Yau variety over $K$. The {\em essential skeleton} $\Sk(X)$ of $X$ was first defined by Kontsevich and Soibelman in \cite{KoSo}.
 The construction was then refined and generalized in \cite{MuNi}. Let $\omega$ be a volume form on $X$. Then one can attach to the pair $(X,\omega)$ a {\em weight function}
 $$\wt_{\omega}\colon X^{\an}\to \R\cup \{+\infty\}$$ that measures the degeneration of $(X,\omega)$ at $t=0$ along points of $X^{\an}$; see \cite[\S4.5]{MuNi}. The essential skeleton $\Sk(X)$ is the locus of points in $X^{\an}$ where $\wt_{\omega}$ reaches its minimal value. This definition only depends on $X$, and not on $\omega$, because multiplying $\omega$ with a scalar $\lambda\in K^{\ast}$ shifts the weight function by the constant $\mathrm{ord}_t\lambda$. The essential skeleton is a non-empty compact subspace of $X^{\an}$, which can be explicitly computed in the following way. Let $\cX$ be an snc-model of $X$, with special fiber $\cX_k=\sum_{i\in I}N_iE_i$. If we view $\omega$ as a rational section of the line bundle $\omega_{\cX/R}(\cX_{k,\red})$, then it defines a Cartier divisor on $\cX$ that we denote by $\mathrm{div}_{\cX}(\omega)$. It is supported on $\cX_k$ because $\omega$ is nowhere vanishing on $X$; thus we can write $\mathrm{div}_{\cX}(\omega)=\sum_{i\in I}\nu_i E_i$.
 If we denote by $\Delta(\cX)$ the dual intersection complex of $\cX_k$, then $\Sk(X)$ is canonically homeomorphic to the sub-$\Delta$-complex of $\Delta(\cX)$ spanned by the vertices corresponding to the components $E_i$ for which $\nu_i/N_i$ is minimal (see Theorem 3 in \cite[\S6.6]{KoSo} and Theorem 4.7.5 in \cite{MuNi}).
 In particular, $\Sk(X)$ is homeomorphic to a finite $\Delta$-complex of dimension $\leq \dim(X)$.

\sss Kontsevich and Soibelman postulated that $\Sk(X)$ should be the base of the non-archimedean SYZ fibration, but the definition of $\Sk(X)$ does not provide us with a map $X^{\an}\to \Sk(X)$. To construct such a map, we will use an alternative description of the essential skeleton that appeared in \cite{NiXu}. Let $\cX$ be a minimal dlt-model of $X$, and denote by $\cX^{\snc}$ the open subscheme of $\cX$ consisting of the points where $\cX$ is regular and $\cX_k$ has strict normal crossings. Then the dual intersection complex
 $\Delta(\cX^{\snc})$ of $\cX^{\snc}_k$ can be canonically embedded into $X^{\an}$ (see \cite[\S3]{MuNi}).
 It follows from \cite[3.3.3]{NiXu} that the image of this embedding is exactly the essential skeleton $\Sk(X)$. To be precise, it is assumed in the statement of \cite[3.3.3]{NiXu} that $\cX$ is $\Q$-factorial and defined over an algebraic curve, but these assumptions are not used in the proof. If the minimal dlt-model $\cX$ is good, we will now construct a continuous retraction  $\rho_{\cX}\colon X^{\an}\to \Sk(X)$ by generalizing the construction for snc-models in \cite[3.1.5]{MuNi}.

 \sss \label{sss:assum} Let $\cX$ be a good minimal dlt-model of $X$. We need to make the following technical assumption: {\em the strata of $\cX_k$ are precisely the log canonical centers of the pair $(\cX,\cX_{k,\red})$ that are contained in $\cX_k$.} By the definition of a dlt-model, every log canonical center of $(\cX,\cX_{k,\red})$ is a stratum. The converse implication
 is known when $\cX$ is defined over an algebraic curve \cite[4.16]{kollar}. We will prove in Corollary \ref{cor:approx} that it also holds when $\cX_k$ is reduced, which is
 the most important case for our purposes. We expect that the assumption is always satisfied, but the relevant parts of the Minimal Model Program have not been written down for $R$-schemes. In any case, if our technical assumption holds, we can proceed in the following way.

\sss \label{sss:rho}  Let $x$ be a point in $X^{\an}$ and let $\red_{\cX}(x)$ be its reduction on $\cX_k$ (see \cite[2.2.2]{MuNi}). Let $Z$ be the unique minimal stratum of $\cX_k$ that contains $\red_{\cX}(x)$. By our assumption \eqref{sss:assum}, $Z$ is a log canonical center of $(\cX,\cX_{k,\red})$. Then $Z\cap\cX^{\snc}$ is a non-empty stratum of $\cX^{\snc}_k$ by the definition of a dlt pair. Thus, it determines a unique face $\tau$ of the dual intersection complex $\Delta(\cX^{\snc})$.
  Let $E_1,\ldots,E_r$ be the prime components of $\cX_k$ that contain $Z$, and let $N_1,\ldots,N_r$ be their multiplicities in $\cX_k$.
   Then $E_1,\ldots,E_r$ correspond precisely to the vertices $v_1,\ldots,v_r$ of $\tau$.
  We choose a positive integer $m$ such that $mE_i$ is Cartier at the point $\red_{\cX}(x)$ for every $i$, and we choose a local equation $f_i=0$ for $mE_i$ at $\red_{\cX}(x)$.
  Then $\rho_{\cX}(x)$ is the point of the simplex $\tau$ with barycentric coordinates $$\alpha=\frac{1}{m}(-N_1\ln|f_1(x)|,\ldots,-N_r\ln|f_r(x)| )$$ with respect to the vertices $(v_1,\ldots,v_r)$. Under the embedding of $\Delta(\cX^{\snc})$ into $X^{\an}$, the point $\rho_{\cX}(x)$ corresponds to the monomial point represented by $(\cX,(E_1,\ldots,E_r),\xi)$ and the tuple $$\frac{1}{m}(-\ln|f_1(x)|,\ldots,-\ln|f_r(x)|),$$ in the terminology of \cite[2.4.5]{MuNi}. It is obvious that this definition does not depend on the choices of $m$ and the local equations $f_i$. It is also straightforward to check that  $\rho_{\cX}$ is continuous, and that it is a retraction onto $\Delta(\cX^{\snc})=\Sk(X)$.

\begin{definition}\label{defi:SYZ}
Let $X$ be a Calabi-Yau variety over $K$ and let $\cX$ be a good minimal dlt-model of $X$ that satisfies assumption \eqref{sss:assum}.
 Then we call the map $\rho_{\cX}\colon X^{\an}\to \Sk(X)$ constructed in \eqref{sss:rho} the {\em non-archimedean SYZ fibration} associated with $\cX$.
 \end{definition}

\sss \label{sss:intaff} Beware that, even though the subspace $\Sk(X)$ of $X^{\an}$ only depends on $X$, the $\Delta$-structure on $\Delta(\cX^{\snc})=\Sk(X)$ and the retraction $\rho_{\cX}\colon X^{\an}\to \Sk(X)$ depend on the choice of the (good) minimal dlt-model $\cX$; we will illustrate this in Example \ref{exam:K3} below. However, the essential skeleton $\Sk(X)$ does carry a canonical piecewise integral affine structure, which is induced by the embedding into the $K$-analytic space $X^{\an}$: see \cite[\S3.2]{MuNi}. If $\cX$ is a minimal dlt-model for $X$, then this piecewise integral affine structure coincides with the one induced by the $\Delta$-complex structure on $\Delta(\cX^{\snc})=\Sk(X)$, provided that the barycentric coordinates on the faces of $\Delta(\cX^{\snc})$ are weighted by the multiplicities of the prime components in $\cX_k$ as in \cite[3.2.1]{MuNi}.

\begin{exam}\label{exam:K3}
Let $X$ be a maximally degenerate $K3$ surface over $K$, and let $\cX$ be a good minimal dlt-model over $R$ with reduced special fiber.
 Then $\Sk(X)$ is a $2$-sphere, and $\Delta(\cX^{\snc})$ provides this sphere with a triangulation.
  Different choices of $\cX$ are related by {\em elementary modifications} (flops) of type 0, 1 or 2 \cite[pp.12-15]{friedman-morrison}. An elementary modification of type 0 does not affect the triangulation of $\Sk(X)$
 or the map $\rho_{\cX}$, because it only changes $\cX$ along a curve contained in a two-dimensional open stratum. An elementary modification of type 1 does not modify the triangulation of $\Sk(X)$ but it does alter the map $\rho_{\cX}$, because the points of $X^{\an}$ that specialize to the minus one curve that is flipped (but not to its intersection with a double curve) will be mapped to a different vertex of $\Sk(X)$. Finally, an elementary modification of type 2 flips an edge in the triangulation of $\Sk(X)$, but does not alter $\rho_{\cX}$ because $\rho_{\cX}$ is invariant under blow-ups of strata in snc-models (see Propositions 3.1.7 and 3.1.9 in \cite{MuNi}).

 Denote by $Z$ the set of vertices of $\Delta(\cX^{\snc})= \Sk(X)$.  It follows from our main result, Theorem \ref{thm:main}, that the fibers of $\rho_{\cX}$ over all the points of $\Sk(X)\setminus Z$ are $2$-dimensional affinoid tori; more precisely, the restriction of $\rho_{\cX}$ over $\Sk(X)\setminus Z$ is an affinoid torus fibration in the sense of \eqref{sss:torusfibration}. The vertices in $Z$ correspond to the prime components of $\cX_k$, and the fiber of $\rho_{\cX}$ over a vertex depends on the geometry of $\cX$ along the corresponding component $E$: it is isomorphic to the generic fiber of the formal completion of $\cX$ along the open stratum of $\cX_k$ whose closure is $E$.
\end{exam}

\begin{prop}\label{prop:retract}
Let $X$ be a projective Calabi-Yau variety over $K$. Then the essential skeleton $\Sk(X)$ is a strong deformation retract of $X^{\an}$. If $\cX$ is a good minimal dlt-model that satisfies the assumption in \eqref{sss:assum}, then $\rho_{\cX}$ is homotopic to the identity on $X^{\an}$ relative to $\Sk(X)$.
\end{prop}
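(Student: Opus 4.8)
The strategy is to transport the Berkovich deformation retraction attached to an snc-model across a log resolution of $\cX$ and then to identify the resulting retraction with $\rho_{\cX}$; the first assertion alone can also be quoted from \cite{NiXu}.

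\emph{Reduction to an snc-model.} Fix a volume form $\omega$ on $X$, so that $\Sk(X)$ is the minimum locus of the weight function $\wt_{\omega}$ on $X^{\an}$. Choose a log resolution $h\colon\cX'\to\cX$ of the pair $(\cX,\cX_{k,\red})$ that is an isomorphism over $\cX^{\snc}$; then $\cX'$ is a proper snc-model of $X$, and under the canonical embeddings into $X^{\an}$ of \cite[\S3]{MuNi} the essential skeleton $\Sk(X)=\Delta(\cX^{\snc})$ becomes a sub-$\Delta$-complex of $\Delta(\cX')$. By Berkovich's theorem on the homotopy type of snc-degenerations, in the form recalled in \cite[\S3]{MuNi} (see also \cite{NiXu}), the retraction $\rho_{\cX'}\colon X^{\an}\to\Delta(\cX')$ is a \emph{strong} deformation retraction; fix a homotopy $H_1$ from $\mathrm{id}_{X^{\an}}$ to $\rho_{\cX'}$ relative to $\Delta(\cX')$, hence a fortiori relative to $\Sk(X)$.

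\emph{Comparison with $\rho_{\cX}$.} I would establish two points. The first is the identity $\rho_{\cX}=(\rho_{\cX}|_{\Delta(\cX')})\circ\rho_{\cX'}$: both sides are continuous maps $X^{\an}\to\Sk(X)$, so it is enough to compare them on the dense set of divisorial points, where one unwinds the recipe of \eqref{sss:rho} and uses that $h$ is an isomorphism over $\cX^{\snc}$, so that the reduction $\red_{\cX}$ of such a point, the minimal stratum of $\cX_k$ through it, and the chosen local equations $f_i$ may all be computed on $\cX'$, which makes the recipe insensitive to first applying $\rho_{\cX'}$. The second point is that $\rho_{\cX}|_{\Delta(\cX')}$ is a strong deformation retraction of $\Delta(\cX')$ onto $\Sk(X)$: this map is the identity on $\Sk(X)$, has image $\Sk(X)$, and is integral-affine on each simplex for the natural structures (cf. \eqref{sss:intaff}), so the task is to homotope it to $\mathrm{id}_{\Delta(\cX')}$ relative to $\Sk(X)$ within $\Delta(\cX')$. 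I would do this by showing that $\Delta(\cX')$ collapses onto the sub-$\Delta$-complex $\Sk(X)$ through finitely many elementary collapses of free faces — each a strong deformation retraction relative to the rest of $\Delta(\cX')$ — and that $\rho_{\cX}|_{\Delta(\cX')}$ is exactly the composite of these moves, so that concatenating them gives the required homotopy $H_2$, which is automatically relative to $\Sk(X)$. The assumption \eqref{sss:assum} enters precisely here: identifying the minimal strata of $\cX_k$ with log canonical centers of $(\cX,\cX_{k,\red})$ is what controls the interaction of those strata with the exceptional divisors of $h$, hence the combinatorics of the collapse.

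\emph{Conclusion.} Running $H_1$ and then, on the resulting point of $\Delta(\cX')$, the homotopy $H_2$, we obtain a homotopy from $\mathrm{id}_{X^{\an}}$ to $(\rho_{\cX}|_{\Delta(\cX')})\circ\rho_{\cX'}=\rho_{\cX}$; since $H_1$ is relative to $\Delta(\cX')\supseteq\Sk(X)$ and $H_2$ is relative to $\Sk(X)$, this homotopy is relative to $\Sk(X)$. This proves the second assertion, and a fortiori exhibits $\Sk(X)$ as a strong deformation retract of $X^{\an}$, which is the first assertion (also available from \cite{NiXu}). The step I expect to be the main obstacle is the second point of the comparison: proving that the explicitly defined retraction $\rho_{\cX}$ is compatible with the simplicial structure of an \emph{arbitrary} log resolution of the dlt-model $\cX$, equivalently that passing from $\cX$ to $\cX'$ only attaches cells that collapse back onto $\Sk(X)$. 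This requires a good understanding of the structure of log resolutions of dlt-models along their strata — a global analogue of the local structure theory developed along one-dimensional strata in Theorem \ref{theo:dlt} and Proposition \ref{prop:toric}.
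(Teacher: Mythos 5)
The paper's proof is much shorter than what you propose, because you have missed a key elementary fact: once the first assertion is known (that $\Sk(X)$ is a strong deformation retract of $X^{\an}$, which you correctly say is available from \cite[4.2.4]{NiXu}), the second assertion is a formal consequence of general topology, with no need to analyze $\rho_{\cX}$ at all. Indeed, if $A\hookrightarrow Y$ admits a strong deformation retraction $H\colon Y\times[0,1]\to Y$, with $H_0=\mathrm{id}_Y$, $H_1=r$ a retraction onto $A$, and $H_t|_A=\mathrm{id}_A$, and if $r'\colon Y\to A$ is \emph{any} continuous retraction, then $r'\circ r=r$ (since $r(Y)\subseteq A$ and $r'|_A=\mathrm{id}_A$), so $r'\circ H$ is a homotopy from $r'$ to $r$ rel $A$; concatenating with $H$ gives a homotopy from $\mathrm{id}_Y$ to $r'$ rel $A$. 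Applying this with $Y=X^{\an}$, $A=\Sk(X)$, $r'=\rho_{\cX}$ gives the statement directly. That is exactly what the paper does.

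Beyond being an unnecessary detour, your proposed route has two genuine gaps. First, the compatibility identity $\rho_{\cX}=(\rho_{\cX}|_{\Delta(\cX')})\circ\rho_{\cX'}$ is asserted but not justified. The retraction $\rho_{\cX}$ is defined via the reduction map $\red_{\cX}$ and the minimal stratum of $\cX_k$ containing the reduction; when $x$ reduces into the exceptional locus of $h$, the relationship between $\red_{\cX}(x)$ and $\red_{\cX'}(x)$, and between the strata of $\cX_k$ and $\cX'_k$, is delicate and not controlled by the sole fact that $h$ is an isomorphism over $\cX^{\snc}$. The compatibility results in \cite{MuNi} (Propositions 3.1.7, 3.1.9) cover blow-ups at strata of snc-models, not an arbitrary log resolution of a dlt-model; you would have to prove this afresh. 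Second, and more seriously, the claim that $\Delta(\cX')$ collapses simplicially onto the sub-$\Delta$-complex $\Sk(X)$ through elementary collapses of free faces, and that $\rho_{\cX}|_{\Delta(\cX')}$ realizes this collapse, is a strong combinatorial assertion that is not established anywhere and is not obviously true. You acknowledge this yourself as the ``main obstacle.'' What is actually true and used in \cite{NiXu} is the topological statement that the inclusion $\Sk(X)\subseteq\Delta(\cX')$ is a homotopy equivalence (indeed a strong deformation retract), but that follows from the MMP arguments there, not from a free-face collapse, and in any case it is precisely the content of the first assertion you already granted yourself. In short: your argument either relies on unproved combinatorial structure, or, to the extent it works, it reproves \cite[4.2.4]{NiXu} the hard way and then still does not notice that the second assertion is immediate from the first.
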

\begin{proof}
It is shown in \cite[4.2.4]{NiXu} that $\Sk(X)$ is a strong deformation retract of $X^{\an}$. This implies that every continuous retraction $X^{\an}\to \Sk(X)$ is homotopic to the identity on $X^{\an}$ relative to $\Sk(X)$; in particular, this is true for the retraction $\rho_{\cX}$.
\end{proof}

\sss Let $X$ be a Calabi-Yau variety over $K$ of dimension $n$. We say that $X$ is {\em maximally degenerate} if $X$ has a semistable snc-model over $R$ and the essential skeleton $\Sk(X)$ has dimension $n$. This is the class of Calabi-Yau varieties where we expect the SYZ mirror symmetry picture to appear. If $X$ is projective, then the condition $\dim(\Sk(X))=n$ is equivalent to the property that, for any topological generator $\sigma$ of $\Gal(K^a/K)\cong \widehat{\mu}(k)$ and any prime number $\ell$, the action of $\sigma$ on the \'etale cohomology space
$$H^{n}_{\text{\'et}}(X\times_K K^a,\Q_\ell)$$
has a Jordan block of rank $n+1$, by \cite[4.2.4(4)]{NiXu}. If $X$ is maximally degenerate and projective, then $\Sk(X)$ is a closed pseudomanifold (see \cite[4.2.4(3)]{NiXu} -- in the statement of that result, one should add the assumption that $X$ has a semistable snc-model, like in \cite[4.1.7]{NiXu}).
  If we assume, moreover, that $X$ is geometrically simply connected and $h^{i,0}(X)=0$ for $0<i<n$, then it is expected that $\Sk(X)$ is homeomorphic to $S^n$.
  It is not difficult to prove that $\Sk(X)$ has the $\Q$-rational homology of $S^n$, and that its fundamental group has trivial profinite completion; see \cite[4.2.4(4)]{NiXu} and \cite[6.1.3(4)]{HaNi}. When $X$ is defined over a curve, the homeomorphic equivalence of $\Sk(X)$ with $S^n$ has been established in \cite{KoXu} when $n\leq 3$, and also when $n=4$ and $X$ has a minimal semistable snc-model. These results can be extended in the following way.

  \begin{lemma}\label{lemma:approx0}
  Let $X$ be a projective Calabi-Yau variety over $K$ of dimension $n$.
   Then there exist a smooth pointed $k$-curve $(S,s)$, a projective flat morphism of $k$-schemes $f\colon \cY\to S$ of relative dimension $n$
   and an isomorphism of $k$-algebras $\widehat{\mathcal{O}}_{S,s}\cong R$ with the following properties:
   \begin{enumerate}
   \item the fibers of $f$ are geometrically connected;
    \item $f$ is smooth over $S\setminus \{s\}$, with trivial relative canonical bundle;
    \item the essential skeleta $\Sk(X)$ and $\Sk(\cY_K)$ are homeomorphic;
    \item if $X$ is geometrically simply connected then the same holds for the the fibers of $f$ over $S\setminus \{s\}$;
    \item if $h^{i,0}(X)=0$ for $0<i<n$, then the same holds for the the fibers of $f$ over $S\setminus \{s\}$.
       \end{enumerate}
       If we fix a model $\cX$ for $X$ over $R$ and a positive integer $N$, then we can moreover arrange that the $R$-schemes $\cX\times_R R/(t^N)$ and
       $\cY\times_S \Spec(R/t^N)$ are isomorphic.
  \end{lemma}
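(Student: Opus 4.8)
The main tool is Artin approximation over the excellent local ring $A=k[t]_{(t)}=\mathcal{O}_{\A^1_k,0}$, whose $t$-adic completion is $R$ (equivalently, by Popescu's theorem, $R$ is a filtered colimit of smooth $A$-algebras). The plan is to attach to $X$ a sufficiently rich package of finitely presented data over $R$, to spread this package out over an \'etale neighbourhood of $0$ in $\A^1_k$ --- which is precisely a smooth pointed $k$-curve $(S,s)$ with $\widehat{\mathcal{O}}_{S,s}\cong R$ --- and to read off properties (1)--(5) from the fact that the spread-out family agrees with the original data modulo $t^N$ for $N$ as large as we please.

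First I would fix a volume form $\omega$ on $X$ and a projective snc-model $\cX'$ of $X$ over $R$ dominating the given model $\cX$ via a projective birational $R$-morphism $\pi\colon\cX'\to\cX$ (Hironaka over $\Spec R$). Writing $\cX'_k=\sum_{i\in I}N_iE_i$ and regarding $\omega$ as a rational section of $\omega_{\cX'/R}(\cX'_{k,\red})$ with divisor $\sum_{i\in I}\nu_iE_i$, the package consists of: the closed immersions $\cX\hookrightarrow\Pro^M_R$, $\cX'\hookrightarrow\Pro^{M'}_R$ and the morphism $\pi$; the subschemes $E_i\subseteq\cX'$; an isomorphism $\omega_{\cX'/R}(\cX'_{k,\red})\cong\mathcal{O}_{\cX'}(\sum_i\nu_iE_i)$ and a trivialisation of $\omega_{X/K}$; and, for each $i$ with $0<i<n$, a finite affine cover of $\cX'$ together with generators and relations of the finitely generated $R$-module $H^0(\cX',\Omega^i_{\cX'/R}(\log\cX'_{k,\red}))$, presented so that its formation survives approximation --- note that its rank over $R$ equals $h^{i,0}(X)$ by flat base change. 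Finally, if $X$ is geometrically simply connected, I would add the datum witnessing that the trivial finite \'etale cover is the only finite \'etale $K$-algebra structure on $X$. By Artin approximation, for every $N\geq 1$ all of this descends to analogous data over $\mathcal{O}_{S,s}$ for some smooth pointed $k$-curve $(S,s)$ with $\widehat{\mathcal{O}}_{S,s}\cong R$, agreeing modulo $t^N$ with the given data; after shrinking $S$ to an affine neighbourhood of $s$ one arranges that the descended morphisms $\cZ\to S$ and $\cZ'\to S$ are projective and flat and smooth over $S\setminus\{s\}$, that $\cZ'$ is regular with $\cZ'_s$ a strict normal crossings divisor having the same components and multiplicities $N_i$ as $\cX'_k$, that $\cZ'\to\cZ$ is projective birational and an isomorphism over $S\setminus\{s\}$, and that the relative canonical bundle of $\cZ\to S$ is trivial over $S\setminus\{s\}$ --- here one uses that the non-smooth locus of $\cZ'\to S$, the locus where $\cZ'\to\cZ$ fails to be an isomorphism, and the zero locus of the spread-out volume form are all closed, miss the fibre over $\eta_S$, and can therefore be removed from $S$ except at $s$. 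Taking $N$ larger than the prescribed integer and than $\max_i\nu_i$, and setting $\cY:=\cZ$, $f\colon\cY\to S$, the last clause of the statement ($\cX\times_RR/(t^N)\cong\cY\times_S\Spec(R/t^N)$) is immediate.

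It remains to verify (1)--(5), keeping in mind that $\cY_K$ is only isomorphic to $X$ "to order $N$": its model $\cY\times_S\Spec R$ agrees with $\cX$ modulo $t^N$, so $\cY_K$ is again a smooth projective $K$-variety with trivial canonical bundle, i.e.\ a projective Calabi--Yau variety over $K$. Geometric connectedness of the fibres (1) holds over $\eta_S$ and is a constructible property of the fibres of the proper flat morphism $f$, hence holds over all of $S\setminus\{s\}$ after shrinking. For (3), $\cZ'\times_S\Spec R$ is an snc-model of $\cY_K$ with the same dual complex, the same multiplicities $N_i$ and the same divisorial multiplicities $\nu_i$ of a volume form as $\cX'$ has for $X$, so the combinatorial recipe recalled in the text yields a homeomorphism $\Sk(\cY_K)\cong\Sk(X)$. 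For (5): if $h^{i,0}(X)=0$ then $H^0(\cX',\Omega^i_{\cX'/R}(\log\cX'_{k,\red}))$ is torsion, killed by some $t^M$; taking $N>M$, the spread-out $\mathcal{O}_S$-module $H^0(\cZ',\Omega^i_{\cZ'/S}(\log\cZ'_{s,\red}))$ is killed by $t^M$ near $s$, hence is a torsion sheaf on the curve $S$, hence vanishes at $\eta_S$; since the log structure is trivial on the fibre over $\eta_S$, this gives $h^{i,0}(\cZ'_{\eta_S})=0$, and local constancy of Hodge numbers in the smooth proper family $\cZ'|_{S\setminus\{s\}}\cong\cZ|_{S\setminus\{s\}}=\cY|_{S\setminus\{s\}}$ yields $h^{i,0}(\cY_x)=0$ for all $x\in S\setminus\{s\}$. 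Property (4) is handled in the same spirit: a finite \'etale cover of $X$ spreads out to a finite cover of $\cX'$ \'etale over the generic fibre, only the trivial one exists by hypothesis, matching modulo $t^N$ and the rigidity of finite \'etale covers (i.e.\ Artin approximation for finite \'etale $K$-algebras) forces $\cY_K$ to have the same property, and the Grothendieck specialisation isomorphism along the connected base $S\setminus\{s\}$ propagates this to all smooth fibres of $f$.

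I expect the main obstacle to be the bookkeeping in the second paragraph: arranging, by a single application of Artin approximation with one sufficiently large $N$, that regularity of $\cZ'$, the snc-shape of $\cZ'_s$ (with the correct components and multiplicities), the triviality of the relative canonical bundle over $S\setminus\{s\}$, the isomorphism $\cZ'\cong\cZ$ over $S\setminus\{s\}$, and the identification modulo $t^N$ all hold after one shrinking of $S$. The second delicate point is the precise formulation of the $\pi_1$-rigidity used in (4), which must be stated robustly enough to transfer from $X$ to the merely $N$-close Calabi--Yau variety $\cY_K$; making the torsion criterion in (5) genuinely survive approximation (by spreading out a \v{C}ech presentation of the relevant $H^0$) is of the same nature but more routine.
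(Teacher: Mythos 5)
Your approach — spreading out the model and approximating it via Artin/Popescu over $k[t]_{(t)}$, then reading off the properties from the agreement modulo $t^N$ — is the same as the paper's (which cites Greenberg approximation and the references \cite[5.1.2]{MuNi} and \cite[4.2.4]{NiXu}). The construction of $(S,s)$ and $\cY$, the last clause, items (1)–(2), and the skeleton comparison in (3) (via the combinatorics of the spread-out snc-model) are handled correctly and in the same spirit as the paper.

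Where your argument is weaker is in items (4) and (5), and the weak point is exactly the one you flagged yourself as "the second delicate point": after approximation, $\cY\times_S\Spec R$ and $\cX'$ agree only modulo $t^N$, so $\cY_K$ and $X$ are different $K$-varieties, and the cohomology/fundamental group of their models over $R$ are not a priori comparable. Your torsion argument for (5) tacitly treats $H^0(\cX',\Omega^i_{\cX'/R}(\log))$ and $H^0(\cY'\times_S\Spec R,\Omega^i(\log))$ as if one were the reduction of the other; they are finitely generated modules over $R$ attached to two different (though $t^N$-congruent) schemes, and spreading out a \v{C}ech presentation does not by itself give the needed identification, because cohomology of coherent sheaves does not commute with the non-flat base change $R\to R/t^N$. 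Similarly, "the rigidity of finite \'etale covers / Artin approximation for finite \'etale $K$-algebras" in (4) is not a statement with a clear precise content connecting $\pi_1^{\mathrm{geo}}(X)$ and $\pi_1^{\mathrm{geo}}(\cY_K)$. The clean way to close both gaps — and, I believe, the one implicit in the paper's reference to Grothendieck specialization \cite[X.3.9]{sga1}, Deligne \cite[5.5]{deligne}, and the proof of \cite[4.2.4]{NiXu} — is the same infinitesimal-invariance principle the paper itself invokes two paragraphs later in the proof of Proposition \ref{prop:approx0}(2): for a semistable snc-model the relevant invariants of the generic fiber (Hodge numbers via log de Rham cohomology, $\pi_1$ via the Kummer-log-\'etale fundamental group) are determined by the induced log structure on the special fiber, and that log structure depends only on the reduction modulo $t^2$ (Illusie; \cite[A.4]{nakayama}). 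Taking $N\geq 2$ thus forces $h^{i,0}(\cY_K)=h^{i,0}(X)$ and $\pi_1^{\mathrm{geo}}(\cY_K)\cong\pi_1^{\mathrm{geo}}(X)$; Deligne invariance and Grothendieck specialization on the smooth proper family over $S\setminus\{s\}$ then propagate these to all fibers there. You will also want to account for the fact that $X$ need not have a semistable snc-model over $R$ itself (Lemma \ref{lemma:approx0} does not assume this), so a finite base change may be needed before invoking the log-geometric comparison; since Hodge numbers and the geometric fundamental group are insensitive to that base change, the conclusion descends. So: same skeleton, but the comparison between $X$ and $\cY_K$ needs the log-geometric/Illusie invariance, not a torsion-of-$H^0$ or a vague \'etale-cover rigidity.
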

  \begin{proof}
One can construct $(S,s)$ and $\cY$ by means of a standard argument based on spreading out and Greenberg approximation; see for instance \cite[5.1.2]{MuNi} and the proof of \cite[4.2.4]{NiXu}. The identification of the essential skeleta of $X$ and $\cY_K$ follows from \cite[4.2.3]{NiXu}.
 Geometric simple connectedness and the vanishing of Hodge numbers carry over to the smooth fibers of the spreading out, by Grothendieck's specialization theorem for \'etale fundamental groups \cite[X.3.9]{sga1} and the invariance of Hodge numbers in smooth and proper families \cite[5.5]{deligne} (in fact, for our purposes, semicontinuity suffices).
  \end{proof}

  \begin{prop}\label{prop:approx0}
    Let $X$ be a projective Calabi-Yau variety over $K$ of dimension $n$.
     \begin{enumerate}
\item   \label{it:simply}  Assume that $X$ has a semistable snc-model.
If $X$ is geometrically simply connected, then $\Sk(X)$ is simply connected.
\item  \label{it:sphere}   Assume that $X$ is maximally degenerate and geometrically simply connected, and that $h^{i,0}(X)=0$ for $0<i<n$. Then $\Sk(X)$ is homeomorphic to $S^n$ when $n\leq 3$, and also when $n=4$ and $X$ has a semistable snc-model $\cX$ with $K_{\cX/R}\sim 0$.
    \end{enumerate}
  \end{prop}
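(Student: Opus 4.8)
The plan is to deduce both parts from the corresponding facts for Calabi--Yau varieties defined over a curve, using Lemma~\ref{lemma:approx0} (more precisely, the construction underlying its proof) to reduce to that case. In both parts $X$ carries a semistable snc-model $\cX$ over $R$ --- this is built into the definition of ``maximally degenerate'' used in part~(2), and is the hypothesis of part~(1). I would spread $\cX$ out over a smooth $k$-curve --- and, in the case $n=4$ of part~(2), also spread out the isomorphism $\omega_{\cX/R}\cong\mathcal{O}_{\cX}$ witnessing $K_{\cX/R}\sim 0$ --- to obtain a projective flat morphism $f\colon\cY\to S$ and a point $s\in S$ with $\widehat{\mathcal{O}}_{S,s}\cong R$ and $\cY\times_S\Spec R\cong\cX$. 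After shrinking $S$ around $s$, the morphism $f$ is semistable near $s$ (regularity and strict normal crossings being open conditions), is smooth with trivial relative canonical bundle over $S\setminus\{s\}$, and in the case $n=4$ satisfies $\omega_{\cY/S}\cong\mathcal{O}_{\cY}$ near $s$. Since $\cY_K\cong X$ we have $\Sk(\cY_K)\cong\Sk(X)$, and $\cY_K$ is maximally degenerate if $X$ is; by Grothendieck's specialization theorem for \'etale fundamental groups and semicontinuity of Hodge numbers --- or directly by parts (4)--(5) of Lemma~\ref{lemma:approx0} --- the general fibres of $f$ inherit geometric simple connectedness and the vanishing $h^{i,0}=0$ for $0<i<n$. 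Thus we may assume $X$ is defined over a curve.

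For part~(2), under these hypotheses the homeomorphism $\Sk(X)\cong S^n$ for $n\le 3$, and for $n=4$ in the presence of a minimal semistable snc-model --- which is exactly the condition $K_{\cX/R}\sim 0$ for a semistable snc-model that we have propagated along the spreading-out --- is the content of \cite{KoXu}, as recalled in the paragraph preceding the proposition. So part~(2) follows directly from \cite{KoXu}.

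For part~(1) I would argue topologically, base-changing the whole situation to $\mathbb{C}$ (harmless, since $\Sk(X)$ is the dual complex of a semistable model and does not change under field extension). Over a small disk $s\in D\subseteq S^{\an}$, with $D^{\ast}=D\setminus\{s\}$, the morphism $f$ is a semistable degeneration. Both $\Sk(X)$ and the dual intersection complex $\Delta(\cY_s)$ of the semistable model $\cY\times_S\Spec R$ are deformation retracts of $X^{\an}$ (Proposition~\ref{prop:retract}, respectively Berkovich's skeletal retraction, cf.\ \cite{MuNi}), so $\pi_1(\Sk(X))\cong\pi_1(\Delta(\cY_s))$. Now I would invoke the Kato--Nakayama / real-oriented-blowup description of the degeneration: the punctured total space $\cY|_{D^{\ast}}$ is homotopy equivalent to a space $\mathcal{L}$ mapping onto $\Delta(\cY_s)$ with path-connected fibres --- over the interior of a face corresponding to an $m$-fold intersection stratum, the fibre is a real $m$-torus times the open stratum --- and this map is $\pi_1$-surjective. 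On the other hand $\cY|_{D^{\ast}}\to D^{\ast}\simeq S^1$ is a fibre bundle whose fibre, the general fibre of $f$, is geometrically --- hence topologically --- simply connected, so $\pi_1(\mathcal{L})\cong\pi_1(\cY|_{D^{\ast}})\cong\Z$, generated by the monodromy loop. The crucial point is that this monodromy loop can be realised inside a single torus fibre of $\mathcal{L}\to\Delta(\cY_s)$: in local coordinates $\prod_i z_i=t$ near a point of $\cY_s$ it is the diagonal circle in the torus of arguments of the $z_i$, and $\mathcal{L}\to\Delta(\cY_s)$ collapses that torus to a point. Hence the surjection $\pi_1(\mathcal{L})\twoheadrightarrow\pi_1(\Delta(\cY_s))$ sends a generator of $\Z$ to the identity, forcing $\pi_1(\Delta(\cY_s))=1$. (Alternatively, one combines ``$\pi_1(\Sk(X))$ is a quotient of $\Z$'' with the triviality of its profinite completion, which follows as in \cite{NiXu,HaNi}.) Therefore $\Sk(X)$ is simply connected.

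I expect the main obstacle to lie in the reduction step: making sure that a semistable snc-model, and in the case $n=4$ also the triviality of $\omega_{\cX/R}$, genuinely descend to a family over a curve (if the spreading-out only yields an isomorphism modulo $t^N$, as in Lemma~\ref{lemma:approx0}, one must still check --- using the rigidity of the standard semistable local model and a descent argument for $\omega$ --- that $\cY\times_S\Spec R$ is again a minimal semistable snc-model). For part~(1) the other delicate point is to pin down the Kato--Nakayama comparison map $\mathcal{L}\to\Delta(\cY_s)$ precisely enough to locate the monodromy loop inside a collapsed torus fibre. Once these are settled, part~(2) is a direct citation of \cite{KoXu}.
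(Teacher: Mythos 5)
Your overall strategy is the same as the paper's: approximate the model by an algebraic family over a smooth $k$-curve via Lemma~\ref{lemma:approx0}, then invoke \cite{KoXu}. But there are two places where your write-up has a genuine gap that the paper closes with a specific tool. In part~(1), the step ``the general fibre of $f$ is geometrically --- hence topologically --- simply connected'' does not follow for an arbitrary complex variety: triviality of the \'etale (profinite) fundamental group does not in general force triviality of the topological one. The paper fills this hole with the Beauville--Bogomolov decomposition theorem, which shows precisely that a complex Calabi--Yau manifold is (topologically) simply connected if and only if it has no nontrivial finite \'etale covers; you need this input before the monodromy/Kato--Nakayama argument (which, incidentally, is essentially what \S34 of \cite{KoXu} does, and the paper simply cites it rather than spelling it out).

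The second gap is in the $n=4$ case of part~(2). You correctly flag the issue that Lemma~\ref{lemma:approx0} only gives an isomorphism modulo $t^N$, so that semistability and the minimality condition $K_{\cX/R}\sim 0$ must be shown to transfer to the algebraic model $\cY$, but your suggested remedy (``rigidity of the standard semistable local model and a descent argument for $\omega$'') is too vague to close it. The paper's resolution is sharper: with $N\geq 2$, regularity of $\cY$ and reducedness/snc of $\cY_k$ are visible mod $t^2$, and --- crucially --- minimality is equivalent to triviality of the \emph{logarithmic} relative canonical line bundle on the special fibre, which by Illusie's theorem (cf.\ \cite[A.4]{nakayama}) depends only on the reduction of $\cY$ mod $t^2$. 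That is the precise mechanism by which $K_{\cX/R}\sim 0$ propagates; without it the reduction to Proposition~8 of \cite{KoXu} is not justified.
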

  \begin{proof}
  \eqref{it:simply} Using Lemma \ref{lemma:approx0}, we can replace $X$ by a degenerating projective flat family of Calabi-Yau varieties $f\colon \cY\to S$ over a smooth pointed $k$-curve $(S,s)$ such that $\cY$ is smooth over $k$, the smooth geometric fibers of $f$ have trivial  \'etale fundamental group, and the fiber $\cY_s$ over $s$
   is a reduced strict normal crossings divisor.
      By the Lefschetz principle, we may assume that $k=\C$. It follows from the Beauville-Bogomolov decomposition theorem that
  a complex Calabi-Yau variety is simply connected if and only if it has no nontrivial finite \'etale covers. Thus the smooth closed fibers of $f$ are simply connected.
   Now we can copy the argument in \S34 of \cite{KoXu} to deduce that the dual intersection complex of $\cY_s$ is simply connected; this dual intersection complex is homotopy equivalent to the essential skeleton $\Sk(\cY_K)$, by \cite[3.2.8]{NiXu}.

  \eqref{it:sphere} We again reduce to the case where $X$ is defined over a complex curve, which has been solved in Proposition 8 in \cite{KoXu}.
  The reduction in the case $n=3$ follows once more from Lemma \ref{lemma:approx0} and the argument in \eqref{it:simply}.
  Now assume that $X$ has a semistable snc-model $\cX$ with $K_{\cX/R}\sim 0$, and let $\cY$ be an algebraic approximation of this model as in Lemma \ref{lemma:approx0}, with $N\geq 2$. Then $\cY$ is smooth over $k$ and $\cY_k$ is a reduced strict normal crossings divisor, since these properties can be checked on the reduction modulo $t^2$.
   The same holds for minimality of such a model, because it is equivalent to the triviality of the
 logarithmic relative canonical line bundle on the special fiber, and the induced logarithmic structure on the special fiber only depends on reduction of $\cY$ modulo $t^2$ (this result is due to Illusie; a proof can be found in \cite[A.4]{nakayama}). Thus $\cY$ satisfies the conditions of Proposition 8 in \cite{KoXu}.
  \end{proof}

\section{Affinoid torus fibrations}
\sss Let $X$ be a maximally degenerate Calabi-Yau variety and let $\cX$ be a good minimal dlt-model of $X$ with reduced special fiber. Then we will see in Corollary \ref{cor:dlt} that $\cX$ satisfies assumption \eqref{sss:assum}, so that it gives rise to a non-archimedean SYZ fibration $\rho_{\cX}\colon X^{\an}\to \Sk(X)$ in the sense of Definition \ref{defi:SYZ}.
 The principal aim of this article is to study the fibers of $\rho_{\cX}$. In the classical SYZ conjecture, the fibers of the SYZ fibration are expected to be special Lagrangian tori away from a codimension two subset of the base. We will now present the corresponding structure in non-archimedean geometry, which was introduced in \cite[\S4.1]{KoSo}.

\sss Let $n$ be a positive integer, and let $T$ be a split algebraic $K$-torus of dimension $n$ with character module $M$ and cocharacter module $N=M^{\vee}$.  We define the {\em tropicalization map} of $T$ by
$$\rho_T \colon T^{\an}\to N_{\R}\colon x\mapsto (M\to \R\colon m \mapsto -\ln|m(x)|).$$ This map is continuous, and its fibers are (not necessarily strictly) $K$-affinoid tori.
 The tropicalization map $\rho_T$ has a canonical continuous section $s\colon N_{\R}\to T^{\an}$ that maps each $n\in N_{\R}$ to the Gauss point of the
 affinoid torus $\rho^{-1}_{T}(n)$. The image of $s$ is called the canonical skeleton of $T$, and denoted by $\Delta(T)$. The map $s$ induces a homeomorphism $N_{\R}\to \Delta(T)$, which we will use to tacitly identify $\Delta(T)$ with $N_{\R}$.

\sss \label{sss:torusfibration} Let $Y$ be a $K$-analytic space, let $B$ be a topological space and let $f\colon Y\to B$ be a continuous map. Then we say that $f$ is an $n$-dimensional {\em affinoid torus  fibration} if we can cover $B$ by open subsets $U$ such that there exist an open subset $V$ of $N_{\R}\cong \R^n$ and a commutative diagram
$$\xymatrix{
f^{-1}(U)\ar[r] \ar[d]_{f} & \rho_T^{-1}(V) \ar[d]^{\rho_T}
\\ U \ar[r] & V
}$$
where the upper horizontal map is an isomorphism of $K$-analytic spaces and the lower horizontal map is a homeomorphism.

\sss If $f\colon Y\to B$ is an $n$-dimensional affinoid torus  fibration, then $f$ induces an integral affine structure on the base $B$ \cite[\S4.1]{KoSo}. For every open $U$ in $B$ as in the definition, and every invertible analytic function $h$ on $f^{-1}(U)$, the absolute value of $h$ is constant on the fibers of $f$ by the maximum modulus principle. Thus $h$ induces a continuous function
$|h|\colon U\to \R_{>0}$.
 The integral affine functions on $U$ are, by definition, the functions of the form $-\ln|h|$. If $U$ is connected, then it is proven in Theorem 1 of \cite[\S4.1]{KoSo} that under the homeomorphism $U\to V$, the ring of integral affine functions on $U$ is identified with the ring of polynomial functions of degree one with $\Z$-coefficients on $V\subset N_{\R}$, so that this construction indeed defines an integral affine structure on $B$ (to be precise, in \cite{KoSo} the authors consider affine functions with constant term in $\R$, rather than $\Z$, but since $K$ is discretely valued in our case, we get a slightly stronger result).

\begin{exam}\label{exam:toric}
  We use the tropicalization map to identify the canonical skeleton $\Delta(T)$ with $N_{\R}$. We denote by $C$ the open cone $(N_{\R}\times \R_{> 0})\cup\{0\}$ in $N_{\R}\oplus \R$.
 Let $\Sigma$ be a locally finite fan of strongly convex rational polyhedral cones in $C$.
  We denote by $\Sigma_1$ the rational polyhedral complex in $N_{\R}$ obtained by intersecting the cones in $\Sigma$ with $N_{\R}\times \{1\}$.
 Consider the torus embedding $T\to \cX$ over $R$
associated with $\Sigma$ as in \cite[1.13]{kunnemann}. The $R$-scheme $\cX$ is separated and locally of finite type, and it is quasi-compact if and only if $\Sigma$ is finite.
 Since $\Sigma$ is supported in $C$, the generic fiber of $\cX$ is canonically isomorphic to the split $K$-torus $T$.
 Assume that $\cX$ is regular; this is equivalent to the property that the fan $\Sigma$ is simple, and it implies that the special fiber $\cX_k$ is a strict normal crossings divisor. Denote by $\fX$ the formal $t$-adic completion of $\cX$. The generic fiber $\fX_\eta$ is a $K$-analytic space endowed with a natural injective morphism of $K$-analytic spaces $i:\fX_\eta\to T^{\an}$. The morphism $i$ embeds $\fX_\eta$ as an analytic domain in $T^{\an}$.

 The construction of the Berkovich skeleton $\Sk(\cX)$ and the retraction map $\rho_{\cX}$ in \cite[\S3]{MuNi} are local on $\cX$, so that they extend immediately to schemes that are locally of finite type. This yields a canonical embedding of the dual intersection complex $\Delta(\cX)$ of $\cX_k$ into $\fX_\eta$. The image of this embedding is called the Berkovich skeleton of $\cX$. The embedding has a canonical retraction $\rho_{\cX}:\fX_\eta\to \Delta(\cX)$.
   It follows directly from the definitions that $\Delta(\cX)$ is contained in $\Delta(T)=N_{\R}$ and coincides with the support of  $\Sigma_1$.
 In particular, if $\Sigma$ is a subdivision of $C$, then $\Delta(\cX)=\Delta(T)$. Moreover, the $\Delta$-structure on $\Delta(\cX)$ is precisely the polyhedral decomposition $\Sigma_1$.
 We have $\fX_\eta=\rho_{T}^{-1}(|\Sigma_1|)$, and the retraction map $\rho_{\cX}$ is the restriction of $\rho_T$ to $\fX_\eta$.
\end{exam}

\sss As a first application, let us discuss the case of abelian varieties. Let $A$ be an abelian $K$-variety of dimension $n$, and denote by $\cA$ its N\'eron model. Then Berkovich has constructed in \cite[\S6.5]{berk-book}
a {\em canonical skeleton} $\Delta(A)$ in $A^{\an}$, together with a continuous retraction $\rho_{A}\colon A^{\an}\to \Delta(A)$, via the theory of non-archimedean uniformization.
 The dimension of $\Delta(A)$ is equal to the toric rank of $\cA^o_k$ (the dimension of the maximal subtorus).
 Let us make this construction more precise in the maximally degenerate case. Assume that $A$ has purely toric reduction, that is, $\cA^o_k$ is a torus. Let $e$ be the identity point on $A$. Then the universal pointed covering space of $(A,e)$ (with respect to the Berkovich topology) is isomorphic to the analytification of a split $n$-dimensional $K$-torus $T$. The kernel $L$ of the morphism $\pi\colon T^{an}\to A^{\an}$ is a lattice in $T(K)$ (called the period lattice), and the image $\rho_T(L)$ of $L$ in $N_{\R}$ is a lattice of rank $n$. By definition, the canonical skeleton $\Delta(A)$ is the image of $\Delta(T)$ under the map $\pi$. Moreover, we have  a Cartesian diagram of topological spaces
 $$\xymatrix{
 T^{\an}\ar[r]^{\rho_T} \ar[d]_{\pi} & N_\R \ar[d]
 \\ A^{\an} \ar[r]^-{\rho_A} & N_{\R}/\rho_T(L)
 }$$
 such that $\rho_A$ sends $\Delta(A)$ homeomorphically onto $N_{\R}/\rho_T(L)$.
  In particular, $\Delta(A)$ is a real torus of dimension $n$, $\rho_A$ is an $n$-dimensional torus fibration, and the induced integral
  affine structure on $\Delta(A)$ coincides with the quotient structure on $N_{\R}/\rho_T(L)$.

\sss  If $A$ has purely toric reduction, then we can interpret $\rho_A$ as a non-archimedean SYZ fibration by means of the theory of Mumford models \cite{mumford}
 and the refinements of Mumford's construction given in \cite{kunnemann}. We say that a model $\mathscr{P}$ of $A$ is a {\em K\"unnemann-Mumford model}
  if it is a regular model that arises through the construction in the proof of \cite[3.5]{kunnemann}.

\begin{prop}\label{prop:abelian}
Let $A$ be an abelian $K$-variety of dimension $n$.  Then the essential skeleton $\Sk(A)$ of $A$ coincides with Berkovich's canonical skeleton $\Delta(A)$.
    If $A$ has semi-abelian reduction and $\mathscr{P}$ is a K\"unnemann-Mumford model of $A$ over $R$, then $\mathscr{P}$ is a good minimal dlt-model that satisfies assumption \eqref{sss:assum}. If $A$ has purely toric reduction, then the non-archimedean SYZ fibration $\rho_{\mathscr{P}}$ coincides with Berkovich's canonical retraction $\rho_A$. In particular, $\rho_{\mathscr{P}}$ is an $n$-dimensional affinoid torus  fibration.
\end{prop}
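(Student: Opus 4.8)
The plan is to establish the three assertions in turn, using a Künnemann--Mumford model $\mathscr{P}$ of $A$ as the common tool, together with the dictionary between its toric combinatorics and Berkovich's non-archimedean uniformization of $A$. For the equality $\Sk(A)=\Delta(A)$, I would first reduce to the case where $A$ has semi-abelian reduction: this holds after a finite extension $K'/K$, and both $\Sk$ and $\Delta$ are compatible with such an extension --- for $\Sk$ by the base-change behaviour of the weight function, and for $\Delta$ because the uniformizing semi-abelian scheme and the period lattice are unchanged while the skeleton of a semi-abelian variety is insensitive to extension of the base field. Assuming semi-abelian reduction, fix a Künnemann--Mumford model $\mathscr{P}$; by Künnemann's construction it is a regular, strictly semistable model, hence a (semistable) snc-model, and a translation-invariant volume form $\omega$ on $A$ extends to a nowhere-vanishing section of $\omega_{\mathscr{P}/R}(\mathscr{P}_{k,\red})$, since \'etale-locally $\mathscr{P}$ is a product of a toric $R$-scheme with an abelian scheme, on which the invariant logarithmic top form is a generator. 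As $\mathscr{P}_{k,\red}=\mathscr{P}_k$ is a principal divisor, this yields $K_{\mathscr{P}/R}\sim 0$ and $\mathrm{div}_{\mathscr{P}}(\omega)=0$, so the description of the essential skeleton via snc-models recalled at the beginning of \S2 identifies $\Sk(A)$ with the whole dual intersection complex $\Delta(\mathscr{P})$ (all multiplicities $N_i$ and all weights $\nu_i$ being $1$ and $0$ respectively). Finally, Künnemann's construction presents $\Delta(\mathscr{P})\subset A^{\an}$, with its retraction, as the quotient of the skeleton of the uniformizing semi-abelian scheme by the period lattice, which is precisely Berkovich's $\Delta(A)$ together with $\rho_A$.

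For the second assertion, the same facts about $\mathscr{P}$ suffice: regularity makes every prime component of $\mathscr{P}_{k,\red}$ Cartier, so $\mathscr{P}$ is a good dlt-model (indeed an snc-model); $K_{\mathscr{P}/R}\sim 0$ makes $K_{\mathscr{P}/R}+\mathscr{P}_{k,\red}$ torsion, hence semi-ample, so $\mathscr{P}$ is minimal; and since $\mathscr{P}_k$ is reduced, assumption \eqref{sss:assum} holds by Corollary \ref{cor:approx} (alternatively, for the snc pair $(\mathscr{P},\mathscr{P}_{k,\red})$ the log canonical centers are by definition the strata of $\mathscr{P}_{k,\red}$, all of which are contained in $\mathscr{P}_k$).

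For the last assertion, assume $A$ has purely toric reduction, so $\pi\colon T^{\an}\to A^{\an}$ is the universal cover with $T$ a split $n$-dimensional torus and $\mathscr{P}=\widetilde{\mathscr{P}}/L$ the quotient by the period lattice $L$ of a toric $R$-model $\widetilde{\mathscr{P}}$ of $T$ as in Example \ref{exam:toric}, associated with a complete polyhedral decomposition $\Sigma_1$ of $N_\R$ invariant under $\rho_T(L)$. Then the generic fibre of the formal completion of $\widetilde{\mathscr{P}}$ equals $\rho_T^{-1}(N_\R)=T^{\an}$ and forms an \'etale cover of the generic fibre of the formal completion of $\mathscr{P}$, recovering $\pi$. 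Since $\mathscr{P}$ is itself snc, the retraction $\rho_{\mathscr{P}}$ of \eqref{sss:rho} coincides with the Berkovich retraction of the snc-model $\mathscr{P}$, and the latter is compatible with this \'etale cover (local equations and multiplicities of the components of $\mathscr{P}_k$ pull back), so that $\rho_{\mathscr{P}}\circ\pi=q\circ\rho_{\widetilde{\mathscr{P}}}$, where $q\colon N_\R\to N_\R/\rho_T(L)$ is the quotient map. By Example \ref{exam:toric} one has $\rho_{\widetilde{\mathscr{P}}}=\rho_T$, hence $\rho_{\mathscr{P}}\circ\pi=q\circ\rho_T=\rho_A\circ\pi$ by the Cartesian square describing $\rho_A$; since $\pi$ is surjective, $\rho_{\mathscr{P}}=\rho_A$. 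Covering $N_\R/\rho_T(L)$ by open subsets $U$ that lift isomorphically to open subsets $V\subset N_\R$, the induced isomorphisms $\rho_A^{-1}(U)\xrightarrow{\sim}\rho_T^{-1}(V)$ over $U\cong V$ exhibit $\rho_{\mathscr{P}}=\rho_A$ as an $n$-dimensional affinoid torus fibration in the sense of \eqref{sss:torusfibration}.

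The main obstacle will be the first step: extracting from Künnemann's construction that $\mathscr{P}$ is strictly semistable with $K_{\mathscr{P}/R}\sim 0$, and that its dual complex equipped with $\rho_{\mathscr{P}}$ matches Berkovich's uniformization-theoretic skeleton $\Delta(A)$ and retraction $\rho_A$ --- that is, making precise the dictionary between the toric model and the non-archimedean uniformization --- together with the (essentially known) base-change invariance of the essential skeleton. Once these are in place, the remaining steps are formal.
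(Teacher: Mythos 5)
Your proposal follows the same overall strategy as the paper: treat the K\"unnemann--Mumford model $\mathscr{P}$ as an snc-model, use the lattice-quotient description of $\mathscr{P}$ as a quotient of a toric relatively complete model $\widetilde{\mathscr{P}}$, and reduce the identification $\rho_{\mathscr{P}}=\rho_A$ to the equality $\rho_{\widetilde{\mathscr{P}}}=\rho_T$ furnished by Example~\ref{exam:toric}. The difference is one of self-containment: where the paper delegates the equality $\Sk(A)=\Delta(A)$ to \cite[4.3.2]{HaNi} and the minimality of $\mathscr{P}$ to \cite[5.1.7]{HaNi}, you supply a direct argument --- extending a translation-invariant volume form to a trivialization of $\omega_{\mathscr{P}/R}(\mathscr{P}_{k,\red})$, deducing $K_{\mathscr{P}/R}\sim 0$ and hence both minimality and $\mathrm{div}_{\mathscr{P}}(\omega)=0$, and so identifying $\Sk(A)$ with the full dual complex $\Delta(\mathscr{P})$. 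That is essentially what those references prove, so the content is the same; your version is just inlined. One small caution: your justification that Berkovich's $\Delta(A)$ is invariant under the base change to the semi-abelian-reduction extension $K'$ is phrased in terms of ``the uniformizing semi-abelian scheme being unchanged,'' which is circular since the uniformization only becomes available over $K'$; the correct (and standard) reason is that $\Delta(A)$ is defined via the uniformization over $\widehat{K^a}$ and is a Galois-invariant subset of $A^{\an}_{\widehat{K^a}}$, so both $\Delta(A)$ and $\Delta(A_{K'})$ are its image in $A^{\an}$, resp.\ $A^{\an}_{K'}$, and the finite surjection $A^{\an}_{K'}\to A^{\an}$ identifies them. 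With that rewording the reduction step is fine.
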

\begin{proof}
The equality $\Delta(A)=\Sk(A)$ is proven in \cite[4.3.2]{HaNi}.
 Let $\mathscr{P}$ be a K\"unnemann-Mumford model for $A$ over $R$. Then, by definition, $\mathscr{P}$ is an snc-model, and thus certainly good and dlt. It is shown in \cite[5.1.7]{HaNi} that $\mathscr{P}$ is minimal.

 Let $\widetilde{\mathscr{P}}$ be a regular relatively complete model of $T$ as in \cite[2.11]{kunnemann} such that the formal $t$-adic completion of
 $\mathscr{P}$ arises as a quotient of the formal $t$-adic completion of $\widetilde{\mathscr{P}}$ under an action of the period lattice. Then, by construction,  $\widetilde{\mathscr{P}}$ is a torus embedding of $T$ over $R$, and we have a commutative diagram
$$
\xymatrix{
T^{\an} \ar[d]_{\pi} \ar@/^/[r]^{\rho_T} \ar@/_/[r]_{\rho_{\widetilde{\mathscr{P}}}} & \Delta(T) \ar[d] \\
A^{\an} \ar@/^/[r]^{\rho_A} \ar@/_/[r]_{\rho_{\mathscr{P}}} &\Delta(A).}
$$
  Thus in order to prove that $\rho_{\mathscr{P}}=\rho_A$, it suffices to observe that
 $\rho_{\widetilde{\mathscr{P}}}=\rho_T$ by Example \ref{exam:toric}.
\end{proof}

\begin{remark}
A refinement of the proof shows that the equality $\rho_A=\rho_{\mathscr{P}}$ remains valid if we only assume that $A$ has semi-abelian reduction; then the non-archimedean
uniformization of $A$ takes the form $\pi:E^{\an}\to A^{\an}$, where $E$ is an extension of an abelian $K$-variety $B$ with good reduction by a split $K$-torus $T$. The dimension of $T$ is precisely the toric rank of $\cA^o_k$, the identity component of the special fiber of the N\'eron model of $A$. The K{\"u}nnemann-Mumford construction produces a relatively complete model $\widetilde{\mathscr{P}}$ of $E$ that is a Zariski-locally trivial fibration in torus embeddings over the N\'eron model of $B$. Since we do not need this generalization in this paper, we omit the details.
\end{remark}

\section{One-dimensional strata of minimal dlt-models}
\sss The aim of this section is to show that good minimal dlt-models with reduced special fibers of Calabi-Yau varieties over $K$ are snc along their one-dimensional strata (in fact, we will prove a more general result -- see Theorem \ref{theo:dlt} and Corollary \ref{cor:dlt}).
 A technical complication is that
the full machinery of the MMP has only been written down for objects of finite type over a field. To circumvent this problem,
we will first prove an approximation result (Proposition \ref{prop:approx}) that will allow us to reduce to that case.

\begin{lemma}\label{lemm:term}
Let $\cX$ be a normal $R$-scheme and let $D$ be a reduced effective divisor on $\cX$ such that $D$ contains the singular locus of $\cX$ and such that the pair $(\cX,D)$ is dlt. Assume that  $K_{\cX/R}+D$ is Cartier.
 Then $\cX$ is terminal; in particular, it is regular in codimension two.
\end{lemma}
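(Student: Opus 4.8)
The plan is to verify terminality straight from the definition of discrepancy, by splitting divisors over $\cX$ according to whether their centre is contained in $D$ or not; regularity in codimension two then follows at once, since a two-dimensional terminal singularity is smooth. (To even speak of terminality one needs $K_\cX$ to be $\Q$-Cartier; since $K_\cX+D$ is Cartier this amounts to $D$ being $\Q$-Cartier, which is automatic in every situation where this lemma is applied — e.g. when $\cX$ is a good dlt-model — so I take it for granted.) The essential input is going to be that $K_\cX+D$ is \emph{Cartier}, not merely $\Q$-Cartier.

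So let $E$ be a prime divisor on some proper birational model $f\colon Y\to\cX$ with $Z:=\mathrm{center}_\cX(E)$ of codimension $\geq 2$; I must show $a(E;\cX,0)>0$. If $Z\not\subseteq D$, then the generic point of $Z$ lies outside $\mathrm{Sing}(\cX)$, so $\cX$ is regular there; localising at that point and using that an exceptional divisor over a regular local ring of dimension $\geq 2$ has discrepancy $\geq 1$, one gets $a(E;\cX,0)\geq 1>0$. If $Z\subseteq D$, I take $Y$ to be a log resolution of $(\cX,D)$ on which $E$ appears and compare $K_Y=f^*K_\cX+\sum_F a(F;\cX,0)F$ with $K_Y=f^*(K_\cX+D)+\sum_F a(F;\cX,D)F$ (sums over all prime divisors $F$ on $Y$) to get $a(E;\cX,0)=a(E;\cX,D)+\mathrm{mult}_E(f^*D)$. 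Now $f^*(K_\cX+D)$ is an integral divisor because $K_\cX+D$ is Cartier, and $K_Y$ together with the strict transforms of the components of $D$ are integral too, so $a(E;\cX,D)\in\Z$; dlt-ness gives $a(E;\cX,D)>-1$, hence $a(E;\cX,D)\geq 0$. Finally $\mathrm{mult}_E(f^*D)>0$, since $Z\subseteq D$ forces $E\subseteq f^{-1}(D)=\mathrm{Supp}(f^*D)$. Therefore $a(E;\cX,0)\geq \mathrm{mult}_E(f^*D)>0$, and $\cX$ is terminal.

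Since regularity in codimension two is what is really used downstream, I would also record a hands-on proof of it that avoids the $\Q$-Cartierness of $K_\cX$. Suppose $\cX$ were singular at a codimension-two point $\xi$; then $\xi\in D$, and after localising I may assume $\cX=\Spec\calo_{\cX,\xi}$ is a two-dimensional normal local scheme, singular at its closed point $\mathfrak m$, carrying a reduced curve $D$ through $\mathfrak m$ with $(\cX,D)$ dlt and $K_\cX+D$ Cartier. Let $f\colon Y\to\cX$ be the minimal resolution, with reduced exceptional fibre $\bigcup_i E_i$; by minimality $K_Y\cdot E_i\geq 0$ for all $i$. Writing $K_Y=f^*(K_\cX+D)-\widetilde D+\sum_i a_i E_i$ with $a_i=a(E_i;\cX,D)$, the integrality argument above again gives $a_i\in\Z_{\geq 0}$. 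Intersecting with $E_i$ and using $f^*(K_\cX+D)\cdot E_i=0$ yields $\sum_j a_j(E_i\cdot E_j)=K_Y\cdot E_i+\widetilde D\cdot E_i\geq 0$ for every $i$; pairing with $(a_i)$ gives $\sum_{i,j}a_ia_j(E_i\cdot E_j)\geq 0$, which by negative definiteness of the intersection matrix forces all $a_i=0$. Then $K_Y\cdot E_i+\widetilde D\cdot E_i=0$ with both summands $\geq 0$, so $\widetilde D\cdot E_i=0$ for every $i$; but the strict transform of a component of $D$ passing through $\mathfrak m$ must meet the exceptional fibre, a contradiction. Hence $\cX$ is regular in codimension two.

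The main obstacle is the case $Z\subseteq D$ — equivalently, the two-dimensional local picture — where the whole argument hinges on upgrading the bound $a(E;\cX,D)>-1$ supplied by dlt-ness to $a(E;\cX,D)\geq 0$; this step works only because $K_\cX+D$ is genuinely Cartier. After that, the passage from "$(\cX,D)$ is canonical as a pair" to "$\cX$ is terminal" is just the observation that $\mathrm{mult}_E(f^*D)$ is strictly positive along every exceptional divisor whose centre lies in $D$.
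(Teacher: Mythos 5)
Your first argument is the paper's proof unwound: the paper's chain $\mld_x(\cX,0)>\mld_x(\cX,D)\geq 1$ is precisely your $a(E;\cX,0)=a(E;\cX,D)+\mathrm{mult}_E(f^*D)>0$ with $a(E;\cX,D)\in\Z_{\geq 0}$ forced by the Cartier hypothesis together with dlt-ness, and both proofs dispatch exceptional divisors whose centre avoids $D$ by the hypothesis $\mathrm{Sing}(\cX)\subseteq D$. You are also right to flag that terminality presupposes $K_{\cX}$ $\Q$-Cartier, which the lemma statement leaves implicit; the paper's $\mld_x(\cX,0)$ silently needs it just as much as your $a(E;\cX,0)$, and in every application it is supplied by the ``good'' hypothesis or by $D$ being the (reduced) Cartier special fibre. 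Your second argument, via the minimal resolution of the localization at a codimension-two point, is not in the paper and is a genuine supplement: it re-derives the only consequence used downstream --- regularity in codimension two --- without ever pulling back $K_{\cX}$, by pushing the same integrality-and-positivity package ($a_i\in\Z_{\geq 0}$, $K_Y\cdot E_i\geq 0$, $\widetilde{D}\cdot E_i\geq 0$) through the negative-definite intersection form and closing with the observation that the strict transform of the component of $D$ through $\xi$ must meet the exceptional fibre; this is correct and slightly more robust than the paper's one-line deduction.
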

\begin{proof}
By the definition of a dlt-pair, the scheme $\cX$ is regular at the generic point of every stratum of $D$, and at all the other points $x\in \cX$,
       the minimal log discrepancy $\mld_x(\cX,D)$ is positive. Since $K_{\cX/R}+D$ is Cartier,
  $\mld_x(\cX,D)$ is an integer, and therefore at least $1$.
  The inequality
  $$ \mld_x(\cX,0)>\mld_x(\cX,D)\geq 1$$ now implies that $\cX$ is terminal.
 In particular, it is regular in codimension two.
\end{proof}

\begin{prop}\label{prop:approx}
Let $X$ be a Calabi-Yau variety over $K$ and let $\cX$ be a good dlt-model of $X$ over $R$ such that $\cX_k$ is reduced.
 Assume that $K_{\cX/R}\sim 0$.
 Let $N$ be a fixed positive integer. Then we can find a smooth pointed $k$-curve $(S,s)$ and
a normal proper flat $S$-scheme $\cY$ such that the following properties hold:
\begin{enumerate}
\item \label{it:1} there exist
an isomorphism of $k$-algebras $\widehat{\mathcal{O}}_{S,s}\cong R$ and an isomorphism of $R$-schemes $$\cX\times_R R/(t^N)\to \cY\times_S \Spec(R/t^N);$$
\item \label{it:2} the morphism $\cY\to S$ has geometrically connected fibers, and its restriction over $S\setminus \{s\}$ is smooth with trivial relative canonical line bundle;
\item the pair $(\cY,\cY_s)$ is dlt, every prime component of $\cY_s$ is $\Q$-Cartier, and $K_{\cY/S}\sim 0$.
\end{enumerate}
\end{prop}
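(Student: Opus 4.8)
The plan is to construct the family $\cY\to S$ by a standard spreading-out argument, starting from a finite presentation of $\cX$ over $R$, and then to argue that all the relevant properties (normality, flatness, properness, the dlt condition, $\Q$-Cartierness of the components of the special fiber, and the triviality of the relative canonical divisor) descend to a suitable étale or Zariski neighborhood of the closed point $s$. The subtlety, compared with the more elementary approximation in Lemma \ref{lemma:approx0}, is that here $\cX$ is only a good \emph{dlt}-model, hence typically singular, so we cannot just invoke smoothness; we must instead spread out the whole singular pair.

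First I would choose a finitely generated $k$-subalgebra $R_0\subset R$ together with a normal proper flat $R_0$-scheme $\cX_0$ inducing $\cX$ after base change to $R$; this uses that $R$ is a filtered colimit of smooth finite-type $k$-algebras (by Néron--Popescu desingularization, or directly since $R=k\llbr t\rrbr$ is a completion of $k[t]_{(t)}$) together with the standard descent of finite-presentation data in \cite[\S8]{GW-AG} or EGA IV.8. Normality, properness and flatness are constructible/open conditions, so after shrinking $\Spec R_0$ they persist; I would take $S$ to be (an affine open neighborhood of the image of $s$ in) this $\Spec R_0$ after replacing it by a smooth model and completing, arranging $\widehat{\mathcal O}_{S,s}\cong R$. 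To get \eqref{it:1}, I would instead start the spreading-out from the Artinian quotient $\cX\times_R R/(t^N)$ and use Greenberg approximation / the Artin approximation theorem to lift an isomorphism class: concretely, spread out so that the truncation mod $t^N$ of $\cY$ agrees with that of $\cX$ on the nose; this is exactly the mechanism already cited in the proof of Lemma \ref{lemma:approx0}. The condition in \eqref{it:2} that the generic fiber is a Calabi-Yau (smooth, geometrically connected, trivial canonical bundle) is automatic on the generic fiber of $S$ and then holds over a dense open by openness of smoothness and constructibility of geometric connectedness; triviality of $\omega$ on the general fiber follows because $K_{\cX/R}\sim 0$ restricts to triviality of $\omega_{X}$ and this spreads out.

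The heart of the argument — and the step I expect to be the main obstacle — is descending the dlt condition together with $K_{\cY/S}\sim 0$ and the $\Q$-Cartier property of the components of $\cY_s$. For the linear equivalence $K_{\cY/S}\sim 0$: since $K_{\cX/R}\sim 0$, after spreading out $K_{\cX_0/R_0}$ is linearly equivalent to a vertical divisor supported on the non-smooth fibers, and after shrinking $S$ (removing the finitely many bad closed points other than $s$) and using that $\Pic$ of the punctured base is controlled, one can arrange $K_{\cY/S}\sim 0$; the key point is that $\cX_k$ is reduced, so by the parenthetical remark in \eqref{sss:rho}\,(the paragraph on minimal dlt-models) a vertical divisor that is torsion in $\Pic$ and supported on a reduced fiber is already principal. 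For the dlt property, I would use the openness of the dlt locus in families: the pair $(\cX,\cX_{k,\red})$ being dlt means $\cX$ is snc at the generic point of every stratum and the minimal log discrepancies are $>0$ elsewhere; both are constructible conditions on the total space of a flat family of pairs, so after shrinking $S$ the pair $(\cY,\cY_s)$ is dlt along the fiber over $s$. One must take care that the \emph{strata} of $\cY_s$ are the expected ones — this is where one invokes that $\cY_s\cong \cX_k$ already at the level of $R/(t^N)$ with $N$ chosen large enough that snc-ness and the stratification are detected mod $t^N$ (snc-ness of a reduced divisor is detected on a first-order neighborhood, so $N\ge 2$ suffices here, exactly as in the proof of Proposition \ref{prop:approx0}\eqref{it:sphere}). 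Finally $\Q$-Cartierness of each prime component $E_i$ of $\cY_s$: some positive multiple $m E_i$ on $\cX$ is Cartier, hence defined by a line bundle which spreads out, and after shrinking $S$ the corresponding reflexive sheaf on $\cY$ stays invertible along $\cY_s$; alternatively one deduces it from $K_{\cY/S}\sim 0$ plus dlt via Lemma \ref{lemm:term} applied locally, which gives regularity in codimension two and hence that the $E_i$ meet the regular locus in a big open set, reducing $\Q$-Cartierness to a finite-type descent statement. Assembling these, and choosing $S$ small enough that all of the finitely many shrinkings are simultaneously in force, yields the desired $\cY\to S$.
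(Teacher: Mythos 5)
Your overall framework is correct — you construct $(S,s)$ and $\cY$ by spreading out and Greenberg/Artin approximation as in Lemma~\ref{lemma:approx0}, and properties (1) and (2) follow as you say. But the heart of the proposition is the dlt condition together with $\Q$-Cartierness, and your treatment of both has genuine gaps.

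For the dlt property, you invoke ``openness of the dlt locus in families'' and claim that the dlt conditions are ``constructible conditions on the total space of a flat family of pairs.'' This does not address the actual difficulty: $\cX$ is a scheme over $R$ and $\cY$ is a scheme over $S$, and they are not fibers of any common family — they merely share the same truncation modulo $t^N$. There is no family over which one could run an openness argument to pass dlt-ness from $\cX$ to $\cY$. Moreover, dlt-ness is computed via resolutions (or via all exceptional divisors over a point), and it is not an \emph{a priori} constructible condition that can be read off from a finite-order thickening of the special fiber, unlike regularity or snc-ness (which do only depend on reduction mod $t^2$). The paper's proof routes around this entirely differently: it first establishes (via Lemma~\ref{lemm:term}) that $\cX$ is regular in codimension two, then performs adjunction to each prime component $E$ of $\cX_k$ to get a dlt pair $(\widetilde{E},\Delta)$ on the normalization. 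The crucial point is that $\widetilde{E}$ is a finite-type $k$-scheme living inside the special fiber, which is \emph{shared} by $\cX$ and $\cY$ (since $N\ge 2$). One can then invoke \emph{inversion of adjunction} \cite[4.9]{kollar} on the $\cY$ side — where the finite-type-over-$k$ machinery is available — to conclude that $(\cY,\cY_s)$ is log canonical near $E$ with the expected log canonical centers, and that it is dlt. This adjunction/inversion-of-adjunction mechanism is the missing key idea; without it your argument does not close.

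The $\Q$-Cartier step has a similar issue. You suggest that a multiple $mE_i$ is Cartier on $\cX$, ``hence defined by a line bundle which spreads out.'' Again, $\cY$ is not obtained from $\cX$ by base change, so there is nothing to literally spread out; a sheaf on $\cX$ does not induce a sheaf on $\cY$. The paper instead takes $N$ strictly larger than all the local Cartier indices and argues explicitly with local generators: if $f$ is a local generator for $\mathcal{O}_{\cX}(-m_xE)$ at $x$ and $g\in\mathcal{O}_{\cY,x}$ is congruent to $f$ mod $t^N$, then $g$ does not vanish on the other components of $\cY_s$ (since $f$ doesn't), and $g$ divides $t^{m_x}$ in $\mathcal{O}_{\cY,x}$ because $f$ does and $N>m_x$; hence $g=0$ cuts out $m_xE$ in $\cY$. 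This explicit $t$-adic comparison is needed, not an abstract descent of line bundles. Your fallback route through Lemma~\ref{lemm:term} also doesn't straightforwardly yield $\Q$-Cartierness — regularity in codimension two is weaker than $\Q$-factoriality and does not by itself make Weil divisors $\Q$-Cartier.
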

\begin{proof}
One can construct $(S,s)$ and $\cY$ satisfying \eqref{it:1} and \eqref{it:2} as in Lemma \ref{lemma:approx0} (note that normality of $\cY$ automatically follows from the fact that $\cY\setminus \cY_s$ is normal and $\cY_s\cong \cX_k$ is reduced).
 If $N$ is at least $2$, then for every point $x$ of $\cX_k\cong \cY_s$, the model $\cX$ is regular at $x$ if and only if $\cY$ is regular at $x$.
 Thus the pair $(\cY,\cY_s)$ is snc at all the points of $\cY_s$ where $(\cX,\cX_k)$ is snc.
  Taking $N$ sufficiently large, we can arrange that every prime component $E$ of $\cY_s$ is $\Q$-Cartier in $\cY$. More precisely, let $x$ be a point of $\cX_k$ and let $m_x$ be the Cartier index of $E$ in $\cX$ at $x$. Let $f$ be a local generator for the ideal sheaf $\mathcal{O}_{\cX}(-m_xE)$ at $x$. Assume that $N>m_x$ and let $g$ be any element of $\mathcal{O}_{\cY,x}$ that is congruent to $f$ modulo $t^{N}$. Obviously, $g$ cannot vanish at any other component of $\cY_s$, because $t$ vanishes along each of these components and $f$ does not. On the other hand, $f$ divides $t^{m_x}$ in $\mathcal{O}_{\cX,x}$, so that $g$ divides $t^{m_x}$ in $\mathcal{O}_{\cY,y}$ since $N>m_x$. Thus the zero locus of $g$ is supported in $\cY_s$, which means that $g=0$ is a local equation for $m_x E$ in $\cY$ at $x$.
 From now on, we assume that $N$ has been chosen large enough to guarantee that $N\geq 2$ and every prime component of $\cY_s$ is $\Q$-Cartier.

 Let $E$ be a prime component of $\cX_k$, denote by $\widetilde{E}$ its normalization, and let $\Delta$ be the pullback of the $\Q$-Cartier divisor $\cX_{k}-E$ to $\widetilde{E}$.  The scheme $\cX$ is regular in codimension two by Lemma \ref{lemm:term}. It follows that the different $\mathrm{Diff}_{\widetilde{E}}(\cX_{k}-E)$ coincides with $\Delta$. Thus the pair $(\widetilde{E},\Delta)$ is dlt by adjunction \cite[4.8]{kollar}, using the same reasoning as in the proof of \cite[4.16.4]{kollar} (except that we have not yet established the normality of $E$).
 Since $N\geq 2$, the scheme $\cY$ is regular in codimension two, as well; since it is of finite type over $k$, we can apply inversion of adjunction \cite[4.9]{kollar} to deduce that $(\cY,\cY_{s})$ is log canonical on a neighbourhood of $E$, and that the log canonical centers of $(\cY,\cY_{s})$ contained in $E$ are precisely the images of the  log canonical centers of $(\widetilde{E},\Delta)$. At the generic point of such a log canonical center, the pair $(\cY,\cY_{s})$ is snc because the same holds for $(\cX,\cX_{k})$.
 Varying $E$, we obtain that $(\cY,\cY_{s})$ is dlt. This implies that every stratum of $\cX_k\cong \cY_{s}$ is normal \cite[4.16]{kollar}; thus, in retrospect, we see that $\widetilde{E}=E$.
 \end{proof}
\begin{cor}\label{cor:approx}
Let $X$ be a Calabi-Yau variety over $K$ and let $\cX$ be a good dlt-model of $X$ over $R$ such that $\cX_k$ is reduced.
 Assume that $K_{\cX/R}\sim 0$.
 Then every stratum of $\cX_k$ is normal, and the strata of $\cX_k$ are precisely the log canonical centers of the pair $(\cX,\cX_{k})$ contained in $\cX_k$.
\end{cor}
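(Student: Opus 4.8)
The plan is to deduce the statement from Proposition \ref{prop:approx}, which reduces everything to the analogous assertion for a dlt pair of finite type over $k$, where one may invoke the classical structure theory of dlt pairs \cite[4.16]{kollar}; the only real work is to transport the relevant local information back to $\cX$ through the truncation isomorphism.

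First I would apply Proposition \ref{prop:approx} with a fixed integer $N\geq 2$. This produces a smooth pointed $k$-curve $(S,s)$ with $\widehat{\mathcal{O}}_{S,s}\cong R$, a normal proper flat $S$-scheme $\cY$ with $(\cY,\cY_s)$ dlt, $\cY_s\cong\cX_k$ reduced and $K_{\cY/S}\sim 0$, and an isomorphism of $R$-schemes $\cX\times_R R/(t^N)\to\cY\times_S\Spec(R/t^N)$. Since $\cY$ is of finite type over $k$, the structure theory of dlt pairs \cite[4.16]{kollar} applies to $(\cY,\cY_s)$: every stratum of $\cY_s$ is normal, and the strata of $\cY_s$ are precisely the log canonical centers of $(\cY,\cY_s)$ contained in $\cY_s$ (all log canonical centers of $(\cY,\cY_s)$ lie in $\cY_s$, because $\cY$ is regular in the complement of $\cY_s$). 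The truncation isomorphism identifies $\cX_k$ with $\cY_s$ as schemes together with their sets of prime components, hence identifies the strata of $\cX_k$ with those of $\cY_s$; in particular every stratum of $\cX_k$ is normal, which is the first assertion. The second assertion contains one easy inclusion: by the definition of a dlt-model, every log canonical center of $(\cX,\cX_k)$ contained in $\cX_k$ is a stratum.

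For the reverse inclusion, let $V$ be a stratum of $\cX_k$ and let $V'\subseteq\cY_s$ be the corresponding stratum. I claim that the generic point $\eta_V$ lies in the open subscheme $\cX^{\snc}\subseteq\cX$ on which $\cX$ is regular and $\cX_k$ is a strict normal crossings divisor. Indeed, by \cite[4.16]{kollar} $V'$ is a log canonical center of the dlt pair $(\cY,\cY_s)$, and the generic point of a log canonical center of a dlt pair lies in the dense open locus where the pair is snc (it lies in every dense open subset of the center, in particular in the open set furnished by the definition of a dlt pair); thus $(\cY,\cY_s)$ is snc at $\eta_{V'}$. Now, at a point of the special fiber, regularity of the model and the strict-normal-crossings property of the reduced special fiber are detected on the reduction modulo $t^2$ — exactly as used in the proof of Proposition \ref{prop:approx} — so, since $N\geq 2$, the pair $(\cX,\cX_k)$ is snc at $\eta_V$ as well.

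It now follows that $V\cap\cX^{\snc}$ is dense in $V$ (it contains $\eta_V$) and is a stratum of the snc divisor $\cX^{\snc}_k$, hence a log canonical center of $(\cX^{\snc},\cX^{\snc}_k)$: it is the center on $\cX^{\snc}$ of a divisorial valuation of log discrepancy $-1$. Extending this valuation to the proper $R$-scheme $\cX$, its center becomes the closure $V$ of $V\cap\cX^{\snc}$, and its log discrepancy with respect to $(\cX,\cX_k)$ is still $-1$ — it is computed at a point lying over $\eta_V\in\cX^{\snc}$, where the two pairs agree, and $(\cX,\cX_k)$ is log canonical because it is dlt. Hence $V$ is a log canonical center of $(\cX,\cX_k)$, and together with the easy inclusion this gives the second assertion. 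The crux of the argument is the third step: since $\cX$ is not of finite type over a field one cannot apply \cite[4.16]{kollar} to it directly, so one must isolate the precise second-order local data — normality of strata and the snc property at the generic points of strata — that does transport faithfully through the truncation isomorphism, the remaining passage between log canonical centers of $(\cX,\cX_k)$ and of the snc pair $(\cX^{\snc},\cX^{\snc}_k)$ being routine.
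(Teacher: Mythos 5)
Your proof is correct and follows essentially the same overall approach as the paper: approximate $\cX$ by a proper scheme $\cY$ over a smooth $k$-curve via Proposition~\ref{prop:approx} (with $N\geq 2$), invoke \cite[4.16]{kollar} for the dlt pair $(\cY,\cY_s)$, and transfer the conclusions to $(\cX,\cX_k)$ through the truncation isomorphism. The one place where you diverge is in justifying the ``hard'' inclusion, that every stratum $V$ of $\cX_k$ is a log canonical center of $(\cX,\cX_k)$: you argue directly that $(\cX,\cX_k)$ is snc at $\eta_V$ (this holds for $(\cY,\cY_s)$ because the generic point of an lc center of a dlt pair lies in the snc locus, and snc-ness is detected modulo $t^2$), and then read off the discrepancy of the obvious divisorial valuation over $\eta_V$ by a local computation inside $\cX^{\snc}$. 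The paper's very terse proof instead asserts that the truncation isomorphism $\cX_k\to\cY_s$ identifies the log canonical centers of the two pairs, a statement that is established only implicitly inside the proof of Proposition~\ref{prop:approx} via the adjunction and inversion-of-adjunction argument on $(\widetilde{E},\Delta)$; your route has the virtue of not requiring the reader to unwind that embedded argument. One small terminological slip: the divisorial valuation with center a stratum of an snc pair has discrepancy $-1$, equivalently \emph{log discrepancy} $0$; ``log discrepancy $-1$'' is nonstandard, though your intent is clear.
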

\begin{proof}
In the proof of Proposition \ref{prop:approx}, we have constructed a dlt pair $(\cY,\cY_s)$ with $\cY$ of finite type over $k$ such that there exists an isomorphism of $k$-schemes
$\cX_k\to \cY_s$ that identifies the log canonical centers of $(\cX,\cX_k)$ contained in $\cX_k$ with those of $(\cY,\cY_s)$ contained in $\cY_s$. Thus the result follows from the corresponding properties of $(\cY,\cY_s)$ proven in \cite[4.16]{kollar}.
\end{proof}

\begin{theorem}\label{theo:dlt}
 Let $\cX$ be a normal separated $k$-scheme of finite type. Let $D$ be a reduced effective divisor on $\cX$ such that the pair $(\cX,D)$ is dlt and the divisor $K_{\cX}+D$ is Cartier.  Assume also that all the prime components of $D$ are $\Q$-Cartier.
 Let $C$ be a one-dimensional stratum of $D$. Then, on an open neighbourhood of $C$, the scheme $\cX$ is regular and $D$ is a divisor with strict normal crossings.
\end{theorem}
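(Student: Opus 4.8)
\textit{Proof plan.} The plan is to induct on $n=\dim\cX$, peeling off a prime component of $D$ by adjunction at each step and landing on a curve as the base case.

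\emph{Reductions.} Since the assertion is local around $C$, I would first replace $\cX$ by a suitable neighbourhood of $C$, so that $C$ is closed and connected and $n=\dim\cX$. Note that $K_\cX=(K_\cX+D)-D$ is $\Q$-Cartier, because $D$ is a sum of $\Q$-Cartier primes; hence $\cX$ is $\Q$-Gorenstein, and since $(\cX,D)$ is dlt it is klt, so Cohen--Macaulay. After shrinking further so that $D$ contains $\mathrm{Sing}(\cX)$ in a neighbourhood of $C$ (which is possible since $\cX$ is regular at the generic point $\eta_C$), Lemma~\ref{lemm:term} applies and $\cX$ is terminal near $C$, in particular regular in codimension two. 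Because $C$ is a one-dimensional stratum and $(\cX,D)$ is snc at $\eta_C$ by the definition of dlt, exactly $n-1$ prime components $D_1,\dots,D_{n-1}$ of $D$ pass through $\eta_C$, the stratum $C$ is a connected component of $D_1\cap\cdots\cap D_{n-1}$, and every other prime component of $D$ meets $C$ in at most finitely many closed points. It then suffices to prove that $\cX$ is regular and $D$ is snc at each closed point $x\in C$.

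\emph{Inductive step via adjunction.} Assume $n\ge 3$ and the theorem in dimension $n-1$. The component $D_1$ is a stratum of the dlt pair $(\cX,D)$, hence normal by \cite[4.16]{kollar}. By adjunction \cite[4.8]{kollar}, the pair $\bigl(D_1,\mathrm{Diff}_{D_1}(D-D_1)\bigr)$ is dlt (using that $\cX$ is regular in codimension two), and $K_{D_1}+\mathrm{Diff}_{D_1}(D-D_1)=(K_\cX+D)|_{D_1}$ is Cartier. The key point is that $\mathrm{Diff}_{D_1}(D-D_1)$ is a \emph{reduced} divisor with $\Q$-Cartier prime components: its coefficients are nonnegative by effectivity, at most $1$ by log canonicity of $(\cX,D)$, and integral because $K_{D_1}+\mathrm{Diff}_{D_1}(D-D_1)$ is Cartier while $K_{D_1}$ is a Weil divisor — equivalently, Cartierness of $K_\cX+D$ forbids the fractional contributions to the different that would arise from $\cX$ being non-Gorenstein along codimension-one loci of $D_1$ — so they lie in $\{0,1\}$; and each prime component is a component of some $D_i|_{D_1}$, the restriction of a $\Q$-Cartier divisor. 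Moreover $C$ is a one-dimensional stratum of $\bigl(D_1,\mathrm{Diff}_{D_1}(D-D_1)\bigr)$. The induction hypothesis thus gives that this pair is snc near $C$; in particular $D_1$ is regular near $C$, and near $C$ the divisors $D_1\cap D_2,\dots,D_1\cap D_{n-1}$ together with any further component of $D$ through a point of $C$ form an snc divisor in $D_1$. Running the same argument with each of $D_2,\dots,D_{n-1}$, all of $D_1,\dots,D_{n-1}$ are regular near $C$, and the scheme-theoretic intersection $D_1\cap\cdots\cap D_{n-1}$ coincides near $C$ with the reduced, regular curve $C$.

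\emph{Concluding at a point of $C$, and the base case.} Fix a closed point $x\in C$. Since $\mathrm{Diff}_{D_1}(D-D_1)$ is snc at $x$ inside the $(n-1)$-dimensional regular scheme $D_1$ and already contains the $n-2$ components $D_1\cap D_2,\dots,D_1\cap D_{n-1}$, at most one further prime component of $D$ passes through $x$. Using terminality of $\cX$ and the reducedness of the differents — which together force $\cX$ to be "of index one" near $C$ — each $D_i$ ($1\le i\le n-1$) is Cartier at $x$; since $\cX$ is Cohen--Macaulay, local equations $f_1,\dots,f_{n-1}$ of these divisors form a regular sequence whose zero scheme is, near $x$, the reduced regular curve $C$, so they extend to a regular system of parameters at $x$. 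Hence $\cX$ is regular at $x$ and $D_1+\cdots+D_{n-1}$ is snc at $x$; incorporating the at most one remaining component through $x$, which the different identifies as meeting $C$ transversely, shows that $D$ is snc at $x$. For the base case $n=2$, the one-dimensional strata of $D$ are precisely its prime components, $\cX$ is a terminal, hence regular, surface by Lemma~\ref{lemm:term}, and then $K_\cX+D$ Cartier together with log canonicity forces, by the same intersection-multiplicity computation on the smooth surface, that at most two components of $D$ meet at any point and transversely — i.e. $D$ is snc near $C$.

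\emph{Main obstacle.} The delicate points are exactly the two places where I invoked the interplay of the hypotheses: first, that all hypotheses descend under adjunction — concretely, that $\mathrm{Diff}_{D_1}(D-D_1)$ is reduced with $\Q$-Cartier components, so that the inductive statement genuinely applies to $\bigl(D_1,\mathrm{Diff}_{D_1}(D-D_1)\bigr)$; and second, the passage from "$D_1$ regular near $C$" back to "$\cX$ regular near $C$", equivalently "the prime components of $D$ are Cartier near $C$". Both rest on the fact — traceable to Lemma~\ref{lemm:term} and the integrality imposed by $K_\cX+D$ being Cartier — that $\cX$ cannot be singular along a subvariety meeting $C$. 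Controlling this at the closed points of $C$ of codimension $\ge 3$ in $\cX$, where the different (a divisor on $D_1$) no longer sees the singularity, is where the argument is hardest and where the $\Q$-Cartierness of the components of $D$ is indispensable.
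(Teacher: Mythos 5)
Your proposal identifies the right overall architecture --- localize around $C$, invoke Lemma~\ref{lemm:term} to get regularity in codimension two, run an induction on $\dim\cX$ by adjoining to prime components of $D$ --- and this matches the paper's strategy closely. The computation that the different $\mathrm{Diff}_{D_1}(D-D_1)$ is a reduced divisor whose prime components are $\Q$-Cartier is essentially correct, though it can be stated more simply: since $\cX$ is regular in codimension two, the different coincides with the plain restriction $(D-D_1)|_{D_1}$, so reducedness and $\Q$-Cartierness descend directly.

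However, the crucial step is missing. You write that ``terminality of $\cX$ and the reducedness of the differents together force $\cX$ to be of index one near $C$, so each $D_i$ is Cartier at $x$'' --- but this is exactly the statement that requires a proof, and you give none; you acknowledge as much in your ``main obstacle'' paragraph. Regularity of $D_1$ near $C$ is genuinely weaker than Cartierness of $D_1$ in $\cX$: in $\A^2/\mu_2$ with action $(u,v)\mapsto(-u,-v)$, the image of $\{u=0\}$ is a regular curve that is only $2$-Cartier. The paper closes this gap by taking the index one cover $h\colon\widetilde{\cX}\to\cX$ of $(\cX,E)$ at $x$, for $E$ a prime component through $C$. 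Since $\cX$ is regular in codimension two, $h$ is \'etale in codimension one over $E$. The proof then shows $\widetilde{E}\to E$ is an isomorphism via two ingredients you do not supply: (i) because $x$ lies on a one-dimensional stratum, exactly $\dim\cX-1$ components of $D$ pass through $x$, and if $\widetilde{E}$ were not unibranch at the unique preimage $\widetilde{x}$, then \'etale-locally $\widetilde{D}$ would have $\geq\dim\cX$ branches through $\widetilde{x}$, making $\widetilde{x}$ a log canonical center of $(\widetilde{\cX},\widetilde{D})$ and contradicting $\mld_{\widetilde{x}}(\widetilde{\cX},\widetilde{D})>0$ (this is the one place the hypothesis ``$C$ is one-dimensional'' is used in an essential, quantitative way); (ii) the normalization $\widetilde{E}'\to E$ is finite and \'etale in codimension one, with $E$ regular by induction, so purity of the branch locus makes it \'etale, hence an isomorphism since $\widetilde{x}'$ is the unique point above $x$. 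Therefore $m=1$. Without the index one cover, the unibranchness argument from the mld bound, and purity of the branch locus, the conclusion that $D_i$ is Cartier at $x$ does not follow from the facts you have assembled, so the proof as proposed has a genuine gap at its central point.
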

\begin{proof}
 We will argue by induction on the dimension of $\cX$. The case $\dim(\cX)=1$ follows at once from the fact that all strata of dlt pairs are normal \cite[4.16(2)]{kollar}. Thus we may assume that $\dim(\cX)\geq 2$, and that the result holds for pairs of strictly lower dimension.

Let $x$ be a point on $C$. We claim that every prime divisor in $D$ that contains $C$ is Cartier at $x$.
Assuming the claim for now, it follows that $C$ is a local complete intersection at $x$, and thus reduced because it is generically reduced (the pair $(\cX,D)$ is snc at the generic point of $C$). Now it follows from \cite[4.16(2)]{kollar} that $C$ is normal, and thus regular since it is of dimension one. But $C$ is defined by the local equations at $x$ of the prime components of $D$ that contain $C$; these local equations form a regular sequence, again by \cite[4.16(2)]{kollar}. We conclude that locally at $x$, the scheme $\cX$ is regular and $D$ is a strict normal crossings divisor.

 Thus it suffices to prove our claim.
 We may assume that $x$ is not a zero-dimensional stratum of $D$, since at such points, the pair $(\cX,D)$ is snc by the definition of a dlt pair.
    Let $E$ be a prime divisor in $D$ that contains $C$.  Let $F_1,\ldots,F_r$ be the non-empty intersections of $E$ with the other components of $D$, and set $\Delta=F_1+\ldots+F_r$.
  Then the pair $(E,\Delta)$ is dlt, and
  $$K_{E}+\Delta=(K_{\cX}+D)|_{E}$$ is Cartier (see Proposition~4.5 and Claim~4.16.4 in \cite{kollar}). By the induction hypothesis, $E$ is regular at $x$.

Let $m\geq 1$ be the index of $E$ at $x$, that is, the smallest positive integer such that $mE$ is Cartier at $x$.
  Working locally around $x$, we may assume that $E$ is regular and that $\mathcal{O}_{\cX}(mE)$ is a trivial line bundle.
The choice of a trivialisation determines a ramified $\mu_m$-cover
 $h\colon \widetilde{\cX}\to \cX$ defined by
 $$\widetilde{\cX}=\Spec_{\cX}\bigoplus_{a=0}^{m}\mathcal{O}_{\cX}(-aE).$$
  Here $\mathcal{O}_{\cX}(-aE)$ is the rank one reflexive sheaf associated with the Weil divisor $-aE$. This is the so-called {\em index one cover} of the pair $(\cX,E)$ at the point $x$; see \cite[2.52]{kollar-mori} for details. The morphism $h$ is \'etale over all the points where $E$ is Cartier; in particular, it is \'etale over all the codimension one points of $E$, since $\cX$ is regular in codimension two by Lemma \ref{lemm:term}.
   The minimality of $m$ implies that the inverse image of $x$ in $\widetilde{\cX}$ consists of a unique point, which we denote by $\widetilde{x}$.

   We write $\widetilde{E}$ for the inverse image of $E$ on $\widetilde{\cX}$, and $\widetilde{D}$ for the inverse image of the divisor $D$. By \cite[5.20]{kollar-mori}, the pair $(\widetilde{\cX},\widetilde{D})$ is log canonical, and $\mld_{\widetilde{x}}(\widetilde{\cX},\widetilde{D})$ is positive.
    Since we chose $x$ on a one-dimensional stratum $C$, the divisor $D$ has $\dim(\cX)-1$ prime components that pass through $x$.
    This implies that $\widetilde{E}$ is unibranch at $\widetilde{x}$. Otherwise, \'etale-locally around $\widetilde{x}$, the divisor $\widetilde{D}$ would have at least $\dim(\cX)$ components passing through $\widetilde{x}$, and $\widetilde{x}$ would be their intersection; but this implies that $\widetilde{x}$ is a log canonical center of $(\widetilde{X},\widetilde{D})$, by \cite[4.41(2)]{kollar}, contradicting the positivity of $\mld_{\widetilde{x}}(\widetilde{\cX},\widetilde{D})$.

 We denote by $\widetilde{E}'$ the normalization of $\widetilde{E}$. Since $\widetilde{E}$ is unibranch at $\widetilde{x}$, there is a unique point $\widetilde{x}'$ on $\widetilde{E}'$ that lies above $\widetilde{x}\in \widetilde{E}$.
   We have already observed that the morphism $\widetilde{E}\to E$ induced by $h$ is \'etale in codimension one; then the normality of $E$ implies that $\widetilde{E}$ is normal in codimension one. Thus $\widetilde{E}'\to E$ is also \'etale in codimension one.
     Since $E$ is regular, the purity of the branch locus now implies that the finite morphism $\widetilde{E}'\to E$ is \'etale at $\widetilde{x}'$; but $\widetilde{x}'$ is the unique point that lies above $x\in E$, so that $\widetilde{E}'\to E$, and hence $\widetilde{E}\to E$, are isomorphisms. We finally conclude that $m=1$, so that $E$ is Cartier at $x$.
\end{proof}

\begin{corollary}\label{cor:dlt}
Let $X$ be a Calabi-Yau variety over $K$, and let $\cX$ be a good minimal dlt-model for $X$ over $R$.
Assume that the special fiber $\cX_k$ is reduced.
 Let $C$ be a one-dimensional stratum of $\cX_k$.
 Then, on an open neighbourhood of $C$, the scheme $\cX$ is regular and $\cX_k$ is a divisor with strict normal crossings.
\end{corollary}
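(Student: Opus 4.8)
The plan is to derive this from Theorem~\ref{theo:dlt} by an approximation argument: that theorem applies to divisors on schemes of finite type over $k$, whereas here $\cX$ is an $R$-scheme, and the Minimal Model Program inputs behind Theorem~\ref{theo:dlt} are only available in the former setting. Since $\cX$ is a good minimal dlt-model with reduced special fiber, one has $K_{\cX/R}\sim 0$, so the hypotheses of Proposition~\ref{prop:approx} are met. I would begin by fixing an integer $N\geq 2$ large enough for that proposition and applying it: this produces a smooth pointed $k$-curve $(S,s)$, an isomorphism $\widehat{\mathcal{O}}_{S,s}\cong R$, and a normal proper flat $S$-scheme $\cY$ with an isomorphism $\cX\times_R R/(t^N)\cong\cY\times_S\Spec(R/t^N)$, such that $(\cY,\cY_s)$ is dlt with every prime component of $\cY_s$ being $\Q$-Cartier and $K_{\cY/S}\sim 0$. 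As $\cY$ is proper over $S$ it is separated and of finite type over $k$, and $\cY_s$ is reduced because $\cX_k$ is.

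Next I would check that $(\cY,\cY_s)$ satisfies all the hypotheses of Theorem~\ref{theo:dlt}. The one point not already recorded in Proposition~\ref{prop:approx} is that $K_{\cY}+\cY_s$ is Cartier: writing $\psi\colon\cY\to S$ for the structure morphism, $K_{\cY/S}\sim 0$ gives $K_{\cY}\sim\psi^*K_S$, and $\cY_s=\psi^*(s)$, so since $K_S$ and the reduced point $s$ are Cartier divisors on the smooth curve $S$, their pullbacks are Cartier on $\cY$. The isomorphism $\cX\times_R R/(t^N)\cong\cY\times_S\Spec(R/t^N)$ restricts, after passing to the underlying reduced subschemes, to an isomorphism of $k$-schemes $\cX_k\cong\cY_s$ which carries prime components to prime components and therefore matches the strata of the divisor $\cX_k$ with those of $\cY_s$, dimension by dimension. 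Let $C'$ be the one-dimensional stratum of $\cY_s$ that corresponds to $C$; applying Theorem~\ref{theo:dlt} to $(\cY,\cY_s)$ and $C'$ produces an open neighbourhood $U'$ of $C'$ in $\cY$ on which $\cY$ is regular and $\cY_s$ is a divisor with strict normal crossings.

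Finally I would transfer the conclusion back to $\cX$. Setting $W'=U'\cap\cY_s$, letting $W\subseteq\cX_k$ be the open subset corresponding to $W'$ under $\cX_k\cong\cY_s$, and putting $U=\cX\setminus(\cX_k\setminus W)$, one gets an open neighbourhood $U$ of $C$ in $\cX$ with $U\cap\cX_k=W$. On $U\setminus\cX_k=\cX_K\cong X$ the scheme is regular, being smooth over $K$, and $\cX_k$ restricts to the empty divisor. At a point $x\in W$, the local rings of $\cX$ and $\cY$ coincide modulo $t^N$, and since $N\geq 2$ and $\cX$, $\cY$ are flat over the respective discrete valuation rings $R$, $\mathcal{O}_{S,s}$, the regularity of the local ring and the strict normal crossings property of the special fiber are determined by this reduction (as in the proof of Proposition~\ref{prop:approx}); hence they hold for $\cX$ at $x$. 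This gives the claim. The main obstacle is really the bookkeeping needed to descend from $\cX/R$ to the finite-type $k$-scheme $\cY$ in order to invoke Theorem~\ref{theo:dlt}; the only step beyond what Proposition~\ref{prop:approx} already delivers is the easy verification that $K_{\cY}+\cY_s$ is Cartier.
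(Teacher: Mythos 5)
Your proposal follows the same route as the paper: apply Proposition~\ref{prop:approx} with $N\geq 2$ to produce a finite-type $k$-scheme $\cY$ over a pointed curve $(S,s)$ agreeing with $\cX$ modulo $t^N$, invoke Theorem~\ref{theo:dlt} for $(\cY,\cY_s)$, and transfer the ``regular and snc'' conclusion back to $\cX$ using that these properties at a point of the special fiber are determined by the reduction modulo~$t^2$. The paper's proof is terser but identical in content; your explicit verification that $K_{\cY}+\cY_s$ is Cartier is a routine detail the paper leaves implicit.
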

\begin{proof}
 The property that the pair $(\cX,\cX_{k,\red})$ is snc at a point of $\cX_k$ only depends on the reduction of $\cX$ modulo $t^2$.
 Thus by means of the approximation result in Proposition \ref{prop:approx}, we can reduce to the case where the model $\cX$ is defined over a smooth algebraic $k$-curve; then the result follows from Theorem \ref{theo:dlt}.
\end{proof}

\section{Toric structure of snc-models along one-dimensional strata}
\sss Let $\cX$ be a regular flat $R$-scheme such that $\cX_k$ is a strict normal crossings divisor.
 We write $\cX_k=\sum_{i\in I}N_iE_i$, where $E_i,\,i\in I$ are the prime divisors in $\cX_k$ and the numbers $N_i$ are their multiplicities. By the definition of a strict normal
 crossings divisor, every stratum of $\cX_k$ is a regular $k$-scheme.
 Let $C$ be a stratum of $\cX_k$. We say that $\cX$ is {\em toric} along $C$ if there exist a regular toric $R$-scheme $\cY$ and a stratum $D$ of $\cY_k$ such that $\cY_k$ is a strict normal crossings divisor and the formal $R$-schemes $\comp{\cX}{C}$ and $\comp{\cY}{D}$ are isomorphic.

\sss \label{sss:CY} Now let $C$ be a one-dimensional stratum of $\cX_k$ that is proper over $k$. Let $E_j,\,j\in J$ be the prime components of $\cX_k$ that contain $C$. For every $j\in J$, we set
 $$b_j=\mathrm{deg}\mathcal{O}_C(-E_j)=-(C\cdot E_j).$$
 We write $C^o=C\setminus (\cup_{i\notin J}E_i)$.
 We say that $\cX$ is {\em log Calabi-Yau} along $C$ if $C\cong \mathbb{P}^1_k$ and $C\setminus C^o$ consists of precisely two points, which we denote by $c_0$ and $c_{\infty}$.
  Denote by $0$ and $\infty$ be the unique elements of $I\setminus J$ such that $\{c_0\}=C\cap E_{0}$ and $\{c_{\infty}\}=C\cap E_{\infty}$ (note that $0$ and $\infty$ are not necessarily distinct).
   Then the fact that $\sum_{i\in I}N_i E_i$ is a principal divisor on $\cX$ implies that
  \begin{equation}\label{eq:principal}
  0=(\cX_k\cdot C)= N_{0}+N_{\infty}-\sum_{j\in J}b_jN_j.
  \end{equation}

 \begin{prop}\label{prop:toric}
  Assume that $\cX_k$ is log Calabi-Yau along $C$ and that $b_j>0$ for all $j\in J$.
   Then $\cX$ is toric along $C$.
 \end{prop}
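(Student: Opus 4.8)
The plan is to produce a regular toric $R$-scheme $\cY$ together with a one-dimensional stratum $D$ of $\cY_k$ such that $\cY_k$ is strict normal crossings and $\comp{\cX}{C}\cong\comp{\cY}{D}$ over $R$, which is exactly what ``toric along $C$'' means. Write $r=|J|$, so $\dim\cX=r+1$. First I would construct $\cY$ combinatorially. Set $d=\gcd(N_0,N_1,\dots,N_r)$; by \eqref{eq:principal} one has $d\mid N_\infty$. Since $(N_1/d,\dots,N_r/d,N_0/d)$ is a primitive integer vector it is the last row of some matrix in $\mathrm{GL}_{r+1}(\Z)$; let $u_1,\dots,u_r,v_0$ be the columns of that matrix, a $\Z$-basis of $\Z^{r+1}$ whose last coordinates are $N_1/d,\dots,N_r/d,N_0/d$, and put $v_\infty=\sum_{j\in J}b_ju_j-v_0$. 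Then $(u_1,\dots,u_r,v_\infty)$ is again a $\Z$-basis, the last coordinate of $v_\infty$ equals $(\sum_jb_jN_j-N_0)/d=N_\infty/d>0$ (again by \eqref{eq:principal}), and the cones $\sigma_0=\langle u_1,\dots,u_r,v_0\rangle$, $\sigma_\infty=\langle u_1,\dots,u_r,v_\infty\rangle$ are smooth and meet exactly along the facet $\tau=\langle u_1,\dots,u_r\rangle$. Taking $\cY$ to be the regular toric $R$-scheme attached to the simple fan of faces of $\sigma_0$ and $\sigma_\infty$ with $\iota=d$, and $D=V(\tau)\cong\Pro^1$, one checks using \eqref{eq:principal} and the standard toric intersection formula that the multiplicities of $D_{u_j},D_{v_0},D_{v_\infty}$ in $\cY_k$ are $N_j,N_0,N_\infty$, that $(D\cdot D_{u_j})=-b_j=(C\cdot E_j)$ and $(D\cdot D_{v_0})=(D\cdot D_{v_\infty})=1$, and that $N^\vee_{D/\cY}=\bigoplus_{j\in J}\mathcal{O}_{\Pro^1}(b_j)$. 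These are precisely the invariants of $\cX$ along $C$: in particular $N^\vee_{C/\cX}=\bigoplus_{j\in J}N^\vee_{E_j/\cX}|_C=\bigoplus_j\mathcal{O}_{\Pro^1}(b_j)$, because the regular divisors $E_j$ ($j\in J$) cross transversally along $C$.

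Next I would build the isomorphism $\comp{\cX}{C}\cong\comp{\cY}{D}$ as a limit of isomorphisms $\psi_n\colon\cX_n\to\cY_n$ of the $n$-th infinitesimal neighbourhoods of $C$ and $D$, compatible with the structure over $R$ and with the prime components of the special fibres; the inverse limit converges because $C\cong\Pro^1$ is proper over $k$. For $n=0$ I take an isomorphism of marked curves $(C;c_0,c_\infty)\xrightarrow{\sim}(D;c^D_0,c^D_\infty)$ and an isomorphism $N^\vee_{C/\cX}\cong N^\vee_{D/\cY}$ respecting the decompositions into the $\mathcal{O}_{\Pro^1}(b_j)$. For the inductive step, both $\cX_{n+1}$ and $\cY_{n+1}$ are logarithmic square-zero thickenings of $\cX_n\cong\cY_n$ by the $\mathcal{O}_{\Pro^1}$-module $\mathrm{Sym}^{n+1}N^\vee_{C/\cX}$ over the log point $(R,\langle t\rangle)$ — here I use that $(\cX,\cX_{k,\red})$ and $(\cY,\cY_{k,\red})$ are log smooth over $(R,\langle t\rangle)$, which holds because both schemes are regular with reduced normal crossings special divisor and $\operatorname{char}k=0$ (so the only torsion in the relevant monoid cokernel, of order dividing a $\gcd$ of multiplicities, is invertible). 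Standard deformation theory then places the obstruction to extending $\psi_n$ to $\psi_{n+1}$ compatibly with all this data in
\[
H^1\!\left(\Pro^1,\ \mathcal{T}\otimes_{\mathcal{O}_{\Pro^1}}\mathrm{Sym}^{n+1}N^\vee_{C/\cX}\right),\qquad \mathcal{T}:=T_{\cX/R}\!\left(-\log\cX_{k,\red}\right)\big|_C,
\]
with the liftings, if any, forming a torsor under the corresponding $H^0$.

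The heart of the argument is the vanishing of this $H^1$. The logarithmic relative tangent sheaf $\mathcal{T}$ on $C\cong\Pro^1$ is a direct sum of line bundles of non-negative degree; the clean way to see this is to observe that under $\psi_n$ it is identified with $T_{\cY/R}(-\log\cY_{k,\red})|_D$, and on the toric model this sheaf is globally \emph{free} of rank $r$, equal to $\mathcal{O}_\cY\otimes_\Z\ker(\Z^{r+1}\xrightarrow{(N_0,\dots,N_r)}\Z)$ — concretely, in the two affine toric charts meeting $D$ the generators $x_i\,\partial_{x_i}$ transform into one another by a constant unimodular matrix, which is where the monomial shape $t=\prod x_i^{N_i}$ gets used. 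Since $N^\vee_{C/\cX}=\bigoplus_{j\in J}\mathcal{O}_{\Pro^1}(b_j)$ with every $b_j>0$, each summand of $\mathcal{T}\otimes\mathrm{Sym}^{n+1}N^\vee_{C/\cX}$ has degree at least $\sum_{j}\alpha_jb_j\ge|\alpha|=n+1>-1$, so the $H^1$ vanishes. Hence every $\psi_n$ extends, and $\psi=\varprojlim_n\psi_n$ exhibits $\comp{\cX}{C}\cong\comp{\cY}{D}$, so $\cX$ is toric along $C$.

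The main difficulty — and, given the erratum mentioned in the acknowledgements, presumably the place where a careless argument breaks down — is precisely this last point: one must insist that the $\psi_n$ preserve the individual prime components of $\cX_{k,\red}$, so that the obstruction is governed by the \emph{logarithmic} relative tangent sheaf rather than the ordinary or absolute one; only the logarithmic sheaf is trivial (with non-negative summands) on $C$, which is exactly what lets the hypothesis $b_j>0$ annihilate all obstructions, whereas the non-logarithmic sheaves would contribute summands of arbitrarily negative degree. Two further, more routine, points to verify are: that the base case $\psi_0\rightsquigarrow\psi_1$ works (its obstruction lies in $\bigoplus_jH^1(\Pro^1,\mathcal{T}\otimes\mathcal{O}_{\Pro^1}(b_j))=0$ by the same mechanism, once $\mathcal{T}$ is computed directly from the local combinatorics of $\Omega^{1,\log}_{\cX/R}$ along the strata), and that one may normalize coordinates so that $t$ becomes a pure monomial on each chart — possible because units on a formal neighbourhood of an affine line in $\cX$ reduce to constants on that line and $k$ is algebraically closed of characteristic $0$, hence admit roots of any order.
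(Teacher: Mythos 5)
Your combinatorial construction of the toric model $\cY$ is essentially identical to the paper's, down to the choice of $v_\infty = -v_0 + \sum_j b_j v_j$, the fan with maximal cones $\sigma_0, \sigma_\infty$, and the multiplicity $\iota$. Where you diverge is in how you build the isomorphism $\comp{\cX}{C}\cong\comp{\cY}{D}$. The paper proceeds by explicit construction: it lifts sections $s_j$ of well-chosen line bundles (here the positivity $b_j>0$ enters, via $H^1(C, \mathrm{Sym}^{n}N^\vee_{C/\cX})=0$, to ensure liftability order by order), adjusts by an $\iota$-th root $w$ so that $t$ becomes a pure monomial, writes down the morphism on the two natural affine charts $\fX_0,\fX_\infty$, and checks it is an isomorphism via EGA~III.4.8.10 and the gluing. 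You instead argue by abstract log deformation theory, lifting $\psi_n$ to $\psi_{n+1}$ with obstruction in $H^1\!\bigl(\Pro^1, \mathcal{T}\otimes\mathrm{Sym}^{n+1}N^\vee_{C/\cX}\bigr)$ and using that $\mathcal{T}=T_{\cX/R}(-\log\cX_{k,\red})|_C$ is globally free (being identified, once the toric model is available, with the free toric log tangent sheaf $\mathcal{O}_C\otimes_\Z\ker(\Z^{r+1}\to\Z)$) while $N^\vee_{C/\cX}$ is a sum of $\mathcal{O}(b_j)$ with $b_j>0$. Both arguments hinge on exactly the same positivity, but yours shifts the cohomological weight from lifting sections to killing obstructions. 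Your remark that it is the \emph{logarithmic} (relative) tangent sheaf that is trivial on $C$, whereas the plain normal or tangent bundles have negative summands, is precisely the reason the argument closes; this is a genuine conceptual insight. The trade-off is that your route asks for more scaffolding that you only sketch: the obstruction formalism for lifting morphisms of log schemes over the formal base $(\Spf R,\langle t\rangle)$ (the $\cX_n$ sit over increasing truncations of $R$, so one should formulate the lifting as a morphism into the log smooth target $\comp{\cY}{D}$ rather than a comparison of abstract thickenings), the direct computation of $\mathcal{T}$ for the base cases $\psi_0\rightsquigarrow\psi_1\rightsquigarrow\psi_2$ (you correctly flag that invoking $\psi_n$ to compute $\mathcal{T}$ is circular until $\psi_1$ or $\psi_2$ is in hand), and the normalization making $t$ a monomial (which the paper handles explicitly with the function $w$ and Hensel). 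None of these is a fatal gap, but each needs to be carried out with care; the paper's approach is more pedestrian but entirely self-contained, and it deals explicitly with the degenerate case $E_0=E_\infty$ (the $\mathcal{I}_\infty$ trick), which in your marked-points-and-conormal-bundle formulation is handled automatically.
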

 \begin{proof}
 We will construct a regular toric $R$-scheme $\cY$ such that $\cY_k$ is a strict normal crossings divisor that has a stratum $D$ satisfying $\comp{\cX}{C}\cong \comp{\cY}{D}$.
  Let $\iota$ be the greatest common divisor of the multiplicities $N_{i}$ with $i\in J\cup\{0\}$.
   We choose lattice vectors $u_i,\,i\in J\cup\{0\}$ in $\Z^J$ with the following property: if we set
    $v_0=(u_0,N_0/\iota)$ and $v_j=(u_j,N_j/\iota)$ in $\Z^J\oplus \Z$, for all $j\in J$, then the set $\{v_i,\,i\in J\cup\{0\}\,\}$ is a basis for
   $\Z^J\oplus \Z$.
      Now we set $$v_{\infty}=-v_0+\sum_{j\in J}b_jv_j$$ in $\Z^J\oplus \Z$. Because of the relation \eqref{eq:principal}, the last coordinate of $v_{\infty}$ equals $N_{\infty}/\iota$.

  For every $i$ in $J\cup\{0,\infty\}$, let $\rho_i$ be the ray in $\R^J\times \R_{\geq 0}$ spanned by the primitive vector $v_i$.
  Consider the cones $\sigma_0$ and $\sigma_{\infty}$ spanned by the rays $\rho_j$, $j\in J$ and by $\rho_0$ and $\rho_{\infty}$, respectively.
   The intersection of these cones is the common face spanned by the rays $\rho_j$, $j\in J$.
  Let $\Sigma$ be the fan in $\R^J\times \R_{\geq 0}$ with maximal cones $\sigma_0$ and $\sigma_\infty$.
     Then $\Sigma$ defines a toric $k$-variety $Y$. We consider the toric morphism $$Y\to \A^1_k=\Spec k[t]$$ associated with the morphism of cocharacter modules
   $$\Z^J\oplus \Z\mapsto \Z\colon (u,v)\mapsto \iota\cdot v,$$ and we set $\cY=Y\times_{k[t]}R$.

    The scheme $\cY$ is regular because the cones $\sigma_0$ and $\sigma_{\infty}$ are simple. Moreover, $\cY_k$ is a strict normal crossings divisor whose prime components correspond to the rays of $\Sigma$, with multiplicities given by $\iota$ times the last coordinates of the primitive generators of the rays; thus we can write
    $$\cY_k=\sum_{j\in J}N_j F_j + N_{0}F_{0}+N_{\infty}F_{\infty}.$$ Set $D=\cap_{j\in J}F_j$ and write $d_0,\,d_{\infty}$ for the intersection points of $D$ with $F_0$ and $F_{\infty}$, respectively.
 By \cite[\S5.1]{fulton}, we have $D\cdot F_j=-b_j$ for every $j\in J$.

  We will now construct an isomorphism of formal $R$-schemes $$f\colon \comp{\cX}{C}\to \comp{\cY}{D}.$$
  For every $n\geq 0$, we denote by $(\cX/C)_n$ the degree $n$ thickening of $C$ in $\cX$, that is, the closed subscheme of $\cX$ defined by the $(n+1)$-th power of the defining ideal of $C$. Thus $(\cX/C)_0=C$ and, by definition, $\comp{\cX}{C}$ is the direct limit of the schemes $(\cX/C)_n$ in the category of locally topologically ringed spaces.

 If $E_0$ and $E_\infty$ are distinct, then, for every $j\in J$, we denote by $\mathcal{L}_j$ the line bundle on $\comp{\cX}{C}$ induced by $\mathcal{O}_{\cX}(-E_j-b_jE_{\infty})$. This definition does not give the desired result when $E_0=E_\infty$. To include that case, we
 consider the formal completion of $E_{\infty}$ at $c_{\infty}$. This is a closed formal subscheme of $\comp{\cX}{C}$; we write $\mathcal{I}_{\infty}$ for its defining ideal sheaf, which is a principal ideal sheaf on $\comp{\cX}{C}$. For every $j\in J$, we denote by $\mathcal{L}'_j$ the line bundle on $\comp{\cX}{C}$ induced by $\mathcal{O}_{\cX}(-E_j)$,
  and we set $\mathcal{L}_j=\mathcal{L}'_j\otimes \mathcal{I}_{\infty}^{b_j}$. Then $\mathcal{L}_j$ is a line bundle on $\comp{\cX}{C}$, and our definition agrees with the previous one in the case where $E_0$ and $E_\infty$ are distinct.

 Since the restriction of $\mathcal{L}_j$ to $C\cong \mathbb{P}^1_k$ has degree $0$, we can choose a non-zero global section $s_j$ of $\mathcal{L}_j|_C$.
  The conormal bundle of $C$ in $\cX$ is given by
  $$\bigoplus_{j\in J}\mathcal{O}_C(-E_j)$$ which is a direct sum of ample line bundles, by our assumption that the numbers $b_j$ are all positive.
  This implies that the degree one cohomology of the conormal line bundle vanishes, so that the maps
  $$H^0((\cX/C)_{n+1},\mathcal{L}_j)\to H^0((\cX/C)_{n},\mathcal{L}_j) $$ are surjective for all $n\geq 0$.
   Thus we can lift $s_j$ to a global section of $\mathcal{L}_j$ on $\comp{\cX}{C}$, which we will still denote by $s_j$.
  The same argument produces a nowhere vanishing section $s_0$ of $\mathcal{O}_{\cX}(E_{\infty}-E_0)$ on $\comp{\cX}{C}$; its inverse $s_{\infty}=1/s_0$ is a nowhere vanishing global section
  of $\mathcal{O}_{\cX}(E_0-E_{\infty})$ on $\comp{\cX}{C}$.

  Consider the open formal subschemes
  $$\fX_0=\comp{\cX}{C}\setminus \{c_{\infty}\},\ \fX_{\infty}=\comp{\cX}{C}\setminus \{c_{0}\}, \quad \fY_0=\comp{\cY}{D}\setminus \{d_{\infty}\},\ \fY_\infty=\comp{\cY}{D}\setminus \{d_{0}\}$$ of $\comp{\cX}{C}$ and $\comp{\cY}{D}$.
   Note that $s_i$ is a global equation for $E_i$ on $\fX_0$, for every $i\in J\cup\{0\}$. Likewise, $s_{\infty}$ defines $E_{\infty}$ on $\fX_{\infty}$, and $s_{j}s^{-b_j}_{\infty}$ defines $E_j$ on $\fX_{\infty}$, for every $j\in J$.
    Moreover, $w'=ts_0^{-N_0}\prod_{j\in J}s_j^{-N_j}$ is an invertible regular function on $\comp{\cX}{C}$.
 Since $C$ is proper, $w'$ is constant on $C$, and, in particular, it has a $\iota$-th root; Hensel's lemma then implies that we can
 find a regular function $w$ on $\comp{\cX}{C}$ such that $w'=w^\iota$.

  Let $\{v_0^{\vee},v_j^{\vee}\,(j\in J)\}$ be the dual basis of $\{v_0,v_j\,(j\in J)\}$. Then we have
  $$ \fY_0=\Spf R\{\chi^{v^{\vee}_0}\}\llbr \chi^{v_j^{\vee}}\,(j\in J) \rrbr/(t- \prod_{i\in J\cup\{0\}}\chi^{N_i v_i^{\vee}} ).$$
   Choose integers $\alpha_0$ and $\alpha_j,\,j\in J$ such that $\alpha_0N_0+\sum_{j\in J}\alpha_jN_j=\iota$.
 Let $f_0\colon \fX_0\to \fY_0$ be the morphism of formal $R$-schemes defined by the morphism of topological $R$-algebras
 $$\mathcal{O}(\fY_0)\to \mathcal{O}(\fX_0)\colon  \chi^{v^{\vee}_i} \mapsto w^{\alpha_i}s_i, \mbox{ for all }i\in J\cup\{0\}. $$
 Let $\mathscr{J}$ be the largest ideal of definition on $\fY_0$. Then $\mathscr{J}(\fY_0)$ is generated by $\chi^{v_j^{\vee}},\,j\in J$.
  The ideal $\mathscr{J}\mathcal{O}_{\fX_0}$ is the largest ideal of definition on $\fX_0$, and its global sections are generated by
  $s_j,\,j\in J$. In particular, $f_0$ is adic.
  The morphism
 $$(f_0)_{\red}\colon C\setminus \{c_{\infty}\}=(\fX_0)_{\red}  \to   (\fY_0)_{\red}=D\setminus \{d_{\infty}\}$$ is an isomorphism.
 It follows from \cite[4.8.10]{ega3.1} that $f_0$ is a closed immersion; since $\fX_0$ and $\fY_0$ has the same dimension and $\fY_0$ is integral, $f_0$ is an isomorphism.

 Finally, we consider the second pair of affine charts  $\fX_{\infty},\,\fY_{\infty}$.
  The lattice vectors $\{-v_0^{\vee},v_j^{\vee}+b_jv^{\vee}_0\,(j\in J)\}$ form the dual basis of $\{v_{\infty},v_j\,(j\in J)\}$, and
    $$ \fY_{\infty}=\Spf R\{\chi^{-v^{\vee}_{0}}\}\llbr \chi^{v_j^{\vee}+b_jv^{\vee}_0}\,(j\in J) \rrbr/(t- \prod_{i\in J\cup\{0\}}\chi^{N_i v_i^{\vee}} ).$$
     Let $f_\infty\colon \fX_\infty\to \fY_\infty$ be the morphism of formal $R$-schemes defined by the morphism of topological $R$-algebras
 $\mathcal{O}(\fY_\infty)\to \mathcal{O}(\fX_\infty)$ that maps  $\chi^{-v^{\vee}_0}$ to $w^{-\alpha_0}s_{\infty}$ and $\chi^{v_j^{\vee}+b_jv^{\vee}_0}$ to $w^{\alpha_j+b_j\alpha_0}s_js^{-b_j}_{\infty}$, for all $j$ in $J$.
 By the same reasoning as above, one sees that $f_{\infty}$ is an isomorphism. By construction, it agrees with $f_0$ on the intersection of
 $\fX_0$ and $\fX_{\infty}$, and the isomorphisms $f_0$ and $f_{\infty}$ glue to an isomorphism of formal $R$-schemes
 $$f\colon \comp{\cX}{C}\to \comp{\cY}{D}.$$
 \end{proof}

\section{The smooth locus of the SYZ fibration}
\begin{theorem}\label{thm:main}
Let $X$ be a maximally degenerate projective Calabi-Yau variety over $K$ of dimension $n$, and assume that $X$ has a good minimal dlt-model $\cX$ over $R$ with reduced special fiber.
  Let $Z$ be the union of the faces of codimension $\geq 2$ in $\Delta(\cX^{\snc})= \Sk(X)$.
 Then the non-archimedean SYZ fibration $$\rho_{\cX}\colon X^{\an}\to \Sk(X)$$ associated with $\cX$ is
 an $n$-dimensional affinoid torus  fibration over $\Sk(X)\setminus Z$.
  Moreover, the induced integral affine structure on $\Sk(X)\setminus Z$ is compatible with the canonical piecewise integral affine structure on $\Sk(X)$ (see \eqref{sss:intaff}), in the sense that they give rise to the same piecewise integral affine functions on $\Sk(X)\setminus Z$.
\end{theorem}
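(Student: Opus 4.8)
The plan is to cover $\Sk(X)\setminus Z$ by charts on which $\rho_{\cX}$ is visibly the tropicalization map of a torus, using the local toric pictures provided by Corollary \ref{cor:dlt}, Proposition \ref{prop:toric} and Example \ref{exam:toric}. Since $X$ is maximally degenerate and projective, $\Sk(X)=\Delta(\cX^{\snc})$ is a closed pseudomanifold of dimension $n$, so removing the faces of codimension $\geq 2$ leaves the union of the open $n$-faces $\sigma^\circ$ and the open $(n-1)$-faces $\tau^\circ$, and the pseudomanifold condition forces each $\tau$ to lie in exactly two $n$-faces; thus $\Sk(X)\setminus Z$ is a topological $n$-manifold. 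Under the correspondence between faces of $\Delta(\cX^{\snc})$ and strata of $\cX^{\snc}_k$, the open $n$-faces correspond to the zero-dimensional strata of $\cX_k$ and the open $(n-1)$-faces to the one-dimensional strata, and by Corollary \ref{cor:dlt} the scheme $\cX$ is regular with $\cX_k$ strict normal crossings near every one-dimensional stratum (hence near every zero-dimensional stratum adjacent to one), so that all these strata lie in $\cX^{\snc}$. Finally, iterated adjunction along the components $E_j$, $j\in J$, through a one-dimensional stratum $C$ — using $K_{\cX}+\cX_{k,\red}\sim 0$ and the regularity of $\cX$ along $C$ — gives $K_C+(\cX_{k,\red}-\sum_{j\in J}E_j)|_C\sim 0$, and since the pseudomanifold condition makes $C\setminus C^o$ consist of exactly two points this boundary has degree $2$, whence $C\cong \Pro^1_k$ and $\cX$ is log Calabi-Yau along $C$ in the sense of \eqref{sss:CY}.

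Next I treat the two kinds of open face. For $w\in\sigma^\circ$ with $\sigma$ corresponding to a zero-dimensional stratum $x$ of $\cX_k$: as $\cX_k$ is reduced and $\cX$ is snc at $x$, the completion $\comp{\cX}{x}$ is isomorphic to the completion at its torus-fixed point of a regular toric $R$-scheme — a single simplicial cone with the summation map, as in \eqref{sss:toric} — so Example \ref{exam:toric} identifies the generic fibre of $\comp{\cX}{x}$, which is exactly $\rho_{\cX}^{-1}(\sigma^\circ)$, with $\rho_T^{-1}$ of the relative interior of an $n$-simplex in $N_{\R}$ for an $n$-dimensional torus $T$, carrying $\rho_{\cX}$ to $\rho_T$; hence $\rho_{\cX}$ is an affinoid torus fibration over $\sigma^\circ$. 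For $w\in\tau^\circ$ with $\tau$ corresponding to a one-dimensional stratum $C$, I want to apply Proposition \ref{prop:toric}; the only hypothesis that is not automatic is $b_j>0$ for all $j\in J$, and by \eqref{eq:principal} this may fail when $n\geq 3$. To reduce to it, I blow up one of the two points $c_0,c_\infty$: this replaces the snc structure of $\cX$ along $C$ by a new one in which $C$ is replaced by its strict transform (still corresponding to $\tau$, and still $\cong \Pro^1_k$ with two special points, so the model stays log Calabi-Yau along it), every $b_j$ increases by $1$, and $\rho_{\cX}$ is unchanged because its construction in \eqref{sss:rho} is local near $C$ and invariant under blow-ups of strata of snc-models (see Propositions 3.1.7 and 3.1.9 in \cite{MuNi} and Example \ref{exam:K3}). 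After finitely many such blow-ups all the $b_j$ are positive, Proposition \ref{prop:toric} yields a regular toric $R$-scheme $\cY$, a one-dimensional stratum $D$, and an isomorphism $\comp{\cX}{C}\cong\comp{\cY}{D}$, and Example \ref{exam:toric} shows that the retraction on the generic fibre of $\comp{\cY}{D}$ is the restriction of the tropicalization map of the split torus $\cY_K$ to the preimage of the open star in $N_{\R}$ of the simplex corresponding to $D$; transporting this along the isomorphism exhibits $\rho_{\cX}$ as an affinoid torus fibration over the open star of $\tau$, an open neighbourhood of $w$ inside $\Sk(X)\setminus Z$.

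Since the open $n$-faces together with the open stars of the $(n-1)$-faces cover $\Sk(X)\setminus Z$, this proves that $\rho_{\cX}$ is an $n$-dimensional affinoid torus fibration over $\Sk(X)\setminus Z$. For the statement on integral affine structures, one observes that on each of the charts above the structure induced by the affinoid torus fibration is, by Theorem 1 of \cite[\S4.1]{KoSo}, the standard integral affine structure on the corresponding open subset of $N_{\R}$; on the other hand, by \eqref{sss:intaff} the canonical piecewise integral affine structure on $\Sk(X)$ is the one coming from the $\Delta$-complex $\Delta(\cX^{\snc})$ with barycentric coordinates weighted by the multiplicities $N_i$, and in each toric local model this multiplicity-weighted structure is precisely the standard structure on $N_{\R}$ built into the identifications of Example \ref{exam:toric}. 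So the two structures have the same local charts and define the same piecewise integral affine functions on $\Sk(X)\setminus Z$. Beyond the two main inputs (Corollary \ref{cor:dlt} and Proposition \ref{prop:toric}), the principal difficulty is to carry out the reduction to $b_j>0$ cleanly and to verify the dictionary that, once $\cX$ is toric along $C$, the combinatorial recipe for $\rho_{\cX}$ in \eqref{sss:rho} coincides with the restriction of the tropicalization map of Example \ref{exam:toric}; granting that, the theorem follows by patching the charts.
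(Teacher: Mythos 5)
Your proposal follows essentially the same route as the paper: cover $\Sk(X)\setminus Z$ by the open $n$-faces and the open stars of the $(n-1)$-faces, identify $\rho_{\cX}$ over an $n$-face with the tropicalization of $\mathbb{G}^n_{m,K}$ via the standard snc presentation of $\widehat{\mathcal{O}}_{\cX,x}$, and near a $1$-dimensional stratum $C$ apply Corollary \ref{cor:dlt}, blow up zero-dimensional strata on $C$ to force $b_j>0$ (with $\rho_{\cX}$ unchanged by \cite[3.1.7, 3.1.9]{MuNi}), and then use Proposition \ref{prop:toric} together with Example \ref{exam:toric}. The one small divergence is in the compatibility of integral affine structures: you verify it chart by chart, whereas the paper first observes that, since $\Sk(X)$ is pure of dimension $n$, it suffices to match the structures over the open $n$-faces and then extend — a slightly cleaner way to avoid tracking how the blow-ups star-subdivide the $\Delta$-structure; both arguments are correct.
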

Recall that such a model $\cX$ can always be found after a finite extension of the base field $K$ if $X$ is defined over a curve (Theorem \ref{thm:exist}), which is the most relevant case for applications to mirror symmetry (see Remark \ref{rem:curve}). We also recall that the essential skeleton $\Sk(X)$ is an $n$-dimensional closed pseudomanifold, by \cite[4.2.4]{NiXu}.  If $h^{i,0}(X)=0$ for $0<i<n$, then $\Sk(X)$ has the rational homology of the $n$-sphere $S^n$ \cite[4.2.4]{NiXu}. If, moreover, $X$ has dimension $3$ and trivial geometric fundamental group, then $\Sk(X)$ is homeomorphic to $S^3$ by \cite[\S34]{KoXu}; see Proposition \ref{prop:approx0}.

\begin{proof}
 We start by showing that $\rho_{\cX}$ is an affinoid torus fibration over the $n$-dimensional open faces of $\Delta(\cX^{\snc})$. Let $\mathring{\sigma}$ be such an open face; it corresponds to a $0$-dimensional stratum $\{x\}$ in $\cX_k$. It follows directly from the construction of $\rho_{\cX}$ that $\rho_{\cX}^{-1}(\mathring{\sigma})$ is the generic fiber of $\Spf \widehat{\mathcal{O}}_{\cX,x}$, the formal completion of $\cX$ at $x$, and that the restriction of $\rho_{\cX}$
    over $\mathring{\sigma}$ only depends on the formal $R$-scheme $\Spf \widehat{\mathcal{O}}_{\cX,x}$. By the definition of a dlt-model, $\cX$ is regular at $x$, and $\cX_k$ is a strict normal crossings divisor locally around $x$. By our assumption that $\cX_k$ is reduced, we know that the $R$-algebra $\widehat{\mathcal{O}}_{\cX,x}$ is isomorphic to
    $$R\llbr z_0,\ldots,z_n \rrbr/(t-z_0\cdot \ldots \cdot z_n).$$
 Thus we can identify the restriction of $\rho_{\cX}$ over $\mathring{\sigma}$ with the restriction of the tropicalization map
 $$\rho_{\mathbb{G}^n_{m,K}}\colon \mathbb{G}^{n,\an}_{m,K}\to \R^n$$
 over the standard $n$-dimensional open simplex $$\{u\in (\R_{>0})^{n+1}\,|\,u_0+\ldots+u_n=1\}.$$
 It follows that $\rho_{\cX}$ is an $n$-dimensional affinoid torus fibration over $\mathring{\sigma}$, and that the induced integral affine structure on $\mathring{\sigma}$ is compatible with the canonical piecewise integral affine structure on $\Sk(X)$.

 If we can extend the integral affine structure from the union of the $n$-dimensional open faces to $\Sk(X)\setminus Z$, then the result will automatically be compatible with the canonical piecewise integral affine structure on $\Sk(X)$. Indeed, the essential skeleton $\Sk(X)$ is purely of dimension $n$ by \cite[4.2.4]{NiXu}.
  Therefore, a real-valued function on a subset of $\Sk(X)$ is piecewise integral affine if and only if it is continuous and extends to a real-valued function on an open neighbourhood of its domain whose restriction to the interior of each $n$-dimensional face of $\Sk(X)$ is piecewise integral affine.

 Now, we prove that $\rho_{\cX}$ is an affinoid torus fibration locally around the open faces of codimension one in $\Delta(\cX^{\snc})$.
  We fix such a face $\mathring{\tau}$, corresponding to a $1$-dimensional stratum $C$ in $\cX_k$.
By Corollary \ref{cor:dlt}, the model $\cX$ is snc along $C$. By means of a finite sequence of blow-ups at zero-dimensional strata, we can moreover arrange that, for every prime component $E$ of $\cX_k$ that contains $C$, the intersection number $(C\cdot E)$ is negative. This may destroy the property that $\cX_k$ is reduced, but it preserves the properties that $\cX$ is snc along every one-dimensional stratum, $\cX$ is a good minimal dlt-model, and $\cX$ satisfies assumption \eqref{sss:assum}. Moreover, the sequence of blow-ups has no effect on the map $\rho_{\cX}$, by \cite[3.1.7]{MuNi}. The effect on the skeleton $\Delta(\cX^{\snc})$ is a sequence of star subdivisions of the $n$-dimensional faces corresponding to the zero-dimensional strata that are blown up \cite[3.1.9]{MuNi}. This does not affect the face $\mathring{\tau}$.

 Thus it suffices to prove that $\rho_{\cX}$ is an affinoid torus fibration over an open neighbourhood of $\mathring{\tau}$, under the following alternative assumptions on the model $\cX$ and the $1$-dimensional stratum  $C$:
  \begin{itemize}
  \item  $\cX$ is a good minimal dlt-model satisfying \eqref{sss:assum};
  \item  the model $\cX$ is snc along $C$;
   \item for every prime component $E$ of $\cX_k$ that contains $C$, the component $E$ has multiplicity one in $\cX_k$, and the intersection number $(C\cdot E)$ is negative.
 \end{itemize}
  We will prove that $\rho_{\cX}$ is an $n$-dimensional affinoid torus  fibration over the open star of $\mathring{\tau}$ in $\Delta(\cX^{\snc})$ (that is, the union of $\mathring{\tau}$ with the two $n$-dimensional open faces whose closure contains $\mathring{\tau}$).

  By adjunction, the model $\cX$ is log Calabi-Yau along $C$ in the sense of \eqref{sss:CY}. We denote the $0$-dimensional strata
  contained in $C$ by $c_0$ and $c_\infty$.
  The model $\cX$ is toric along $C$, by Proposition \ref{prop:toric}.
   More precisely, the proof of Proposition \ref{prop:toric} gives an explicit description of the formal completion $\comp{\cX}{C}$ of $\cX$ along $C$.
   Note that, under our assumptions and with the notations in that proof, the number $\iota$ is equal to one and $N_j=1$ for every $j\in J$, so that we can make the construction of the fan $\Sigma$ more explicit:
   we choose a bijection of $J$ with $\{1,\ldots,n\}$. Then we can take for $(u_0,\ldots,u_{n-1})$ the standard basis of $\Z^n$, and set $u_n=0$.
   The vector $v_{\infty}$ is now given by $(-1,b_1,\ldots,b_{n-1},N_{\infty})$. Let $\Sigma$ be the fan with maximal cones $\sigma_0$ and $\sigma_{\infty}$.
   Then the toric scheme $\cY$ constructed in the proof of Proposition \ref{prop:toric} is precisely the torus embedding associated with $\Sigma$ in the sense of Example \ref{exam:toric}.

    Let $U$ be the union in $\Delta(\cX^{\snc})$ of the open faces corresponding to the strata $c_{0}$, $c_{\infty}$ and $C$ in $\cX_k$. This is an open neighbourhood of $\mathring{\tau}$ in $\Sk(X)$.
     Let $V$ be the interior of the intersection of $|\Sigma|$ with $\R^n\times \{1\}$.
    It follows directly from the construction of $\rho_{\cX}$ that $\rho_{\cX}^{-1}(U)$ is the generic fiber of $\comp{\cX}{C}$, and that the restriction of $\rho_{\cX}$
    over $U$ only depends on the formal $R$-scheme $\comp{\cX}{C}$. If $D$ is the torus orbit in $\cY_k$ corresponding to the codimension one cone $\sigma_0\cap \sigma_{\infty}$ in $\Sigma$, then we have shown in the proof of Proposition \ref{prop:toric} that $\comp{\cX}{C}$ is isomorphic to $\comp{\cY}{D}$.
     Thus, by Example \ref{exam:toric}, we can identify the restriction of $\rho_{\cX}$ over $U$ with the restriction of $\rho_{\mathbb{G}^n_{m,K}}$ over $V$, which is an $n$-dimensional affinoid torus  fibration by definition.
\end{proof}

\sss Note that the proof of Proposition \ref{prop:toric} gives an explicit description of the integral affine structure on $\Sk(X)\setminus Z$ induced
by the non-archimedean SYZ fibration: after our finite sequence of blow-ups at zero-dimensional strata, the gluing data along codimension one faces of the skeleton are determined by the intersection numbers $(C\cdot E)$. This is quite similar to the constructions for log Calabi-Yau surfaces in \cite{GHK,yu} and for toric degenerations in the Gross-Siebert program \cite{GS-toric}.

\end{document}